\newtheorem{theorem}{Theorem}[section]
\newtheorem{proposition}[theorem]{Proposition}
\newtheorem{lemma}[theorem]{Lemma}
\newtheorem{corollary}[theorem]{Corollary}
\theoremstyle{remark}
\newtheorem{remark}[theorem]{Remark}
\newtheorem{example}[theorem]{Example}
\newcommand{\NN}{\mathbb{N}}
\newcommand{\ZZ}{\mathbb{Z}}
\newcommand{\RR}{\mathbb{R}}
\newcommand{\PP}{\mathbb{P}}
\newcommand{\EE}{\mathbb{E}}
\newcommand{\var}{\mathrm{Var}}
\newcommand{\norm}[1]{\left\lVert#1\right\rVert}
\colorlet{DarkGreen}{green!40!black!}
\begin{document}

\title[An approximate zero-one law]{An approximate zero-one law via the Dialectica interpretation}

\author[Thomas Powell and Alex Wan]{Thomas Powell and Alex Wan}
\date{\today}

\begin{abstract}
Zero-one laws state that probabilistic events of a certain type must occur with probability either $0$ or $1$, and nothing in between. We formulate a syntactic zero-one law, which enjoys good logical properties while being broadly applicable in probability theory. Then, inspired by G\"odel's Dialectica interpretation, we finitise it: The result is an approximate zero-one law which states that events with a particular finite structure occur with probability close to $0$ or $1$ up to an arbitrary degree of precision. This approximate zero-one law is equivalent - over classical logic - to the original zero-one law, but in contrast to the latter, is formulated entirely in terms of finite unions and intersections of events. Furthermore, in line with recent logical metatheorems for probability, it admits a computational interpretation, which in turn facilitates a quantitative analysis of theorems whose proof makes use of zero-one laws. Concrete applications in this spirit, over a variety of different settings, are discussed.
\end{abstract}

\maketitle

\section{Introduction}
\label{sec:intro}

Zero-one laws assert that events $A$ of a particular kind occur with probability $0$ or $1$. A canonical example is the Kolmogorov zero-one law, which states that if $(A_n)$ is a sequence of independent events, whenever an event $A$ belongs to the associated tail algebra, that is
\[
A\in \bigcap_{n=0}^\infty \mathcal{T}_n \ \ \mbox{for} \ \ \mathcal{T}_n:=\sigma(A_{n},A_{n+1},\ldots),
\] 
then we can conclude that $\PP(A)\in \{0,1\}$. The Kolmogorov zero-one law, along with its numerous variants and extensions (such as L\'evy's zero-one law for martingales and the Hewitt-Savage zero-one law for symmetric events), has applications throughout probability theory, including the convergence of random series and percolation theory. Indeed, zero-one laws are so fundamental that one tends to find them wherever stochasticity features over some infinite structure. For example, a number of zero-one laws exist in the context of logic, notably those relating to first-order sentences in finite models \cite{Fagin:76:probabilitiesonfinitemodels,GlebskiiKoganLiogon'kii:72:realizabilitypredicatecalculus}, a line of research that continues to be developed (e.g.\ \cite{GraedelHelalNaafWilke:22:zeroonesemiring}).

We take a first step towards tackling the question of whether it is possible to interpret zero-one laws in a `finitary', or computational manner. This is a challenging problem: Zero-one laws tell us that one of two things happen, but do not necessarily provide any insight into \emph{which one}. Indeed, in general it is not possible to effectively decide which of the two cases hold, and this is often a deep mathematical question in its own right, requiring insights far beyond those of the initial zero-one law (a classic example being Kesten's celebrated result \cite{kesten:80:half} on the critical threshold for bond percolation in $\ZZ^2$). Even when this information is available, the individual statements $\PP(A)=0$ and $\PP(A)=1$ are typically themselves infinitary in nature, and, as we will show, in the simplest of settings cannot in general be given a direct computational interpretation.

We approach this problem via the logic-based proof mining program \cite{kohlenbach:08:book}. First, we formulate a new zero-one law specifically designed for situations where the underlying event can be put into a particularly nice syntactic form. This result arises as a natural generalisation of the zero-one laws induced by the Borel-Cantelli lemmas and Erd\H{o}s-R\'enyi theorem, the latter having been recently studied from a logical perspective in \cite{arthan-oliva:21:borel-cantelli}, but covers a variety of situations in which the Kolmogorov zero-one law would traditionally be invoked. We then use G\"odel's Dialectica interpretation \cite{goedel:58:dialectica} to produce a finitization (in the sense of Tao \cite{tao:07:softanalysis}) of our zero-one law. The resulting finitization can be given a direct, quantitative interpretation, analogous to the so-called rates of metastability that are widely seen in proof mining (where they are typically associated with convergence statements). More interestingly, it constitutes a new example of the general phenomenon, recently made precise by Neri and Pischke \cite{neri-pischke:pp:formal}, of the extractability of uniform computational information from proofs in probability that can be formulated in terms of probability contents (i.e.\ finite unions and intersections). We illustrate our finitary zero-one law with three short case studies covering, respectively, the Erd\H{o}s-R\'enyi theorem, convergence for series of random variables, and bond percolation, the latter appearing for the first time in the context of proof mining.

Our work forms a contribution to the general program of bringing proof mining to bear on probability theory. While up until very recently this was limited to isolated case studies \cite{arthan-oliva:21:borel-cantelli,avigad-dean-rute:12:dominated,avigad-gerhardy-towsner:10:local}, the last years have seen a sustained, systematic approach to applying proof theory in probability, including the aforementioned logical metatheorems \cite{neri-pischke:pp:formal}, along with a series of applications focused on the theory of martingales \cite{neri-powell:pp:rs,neri-powell:25:martingale}, modes of stochastic convergence \cite{neri-pischke-powell:25:learnability}, laws of large numbers \cite{Ner2024,neri:25:kronecker}, and stochastic optimization \cite{NPP2025a,PP2024}. We highlight the relevance of our work within this broader effort throughout.

\subsection*{Preliminaries}

We assume only basic facts from probability, background for which can be found in any standard text (e.g.\ \cite{Kle2020,shiryaev:book}, which also include an exposition of the various well-known zero-one laws and their applications, providing further context for the results we present). 

What is not entirely standard is our general tendency to represent events using logical notation. Fixing some probability space $(\Omega,\mathcal{F},\PP)$, we regard a logical formula $B(x_1,\ldots,x_n)$ with parameters $x_1,\ldots,x_n$ as \emph{measurable} (and so can be assigned a probability) when $B(x_1,\ldots,x_n)$ also depends on $\omega\in \Omega$, and for any choice of parameters we have 
\[
\{\omega \mid B(x_1,\ldots,x_n)(\omega)\}\in \mathcal{F}. 
\]
As such, we freely use logical notation inside probability measures under the implicit assumption that any formulas that arise in this way can be appropriately represented as measurable formulas. For example, if some measurable $B(n)$ is parametrised by $n\in\NN$, then we can define $\exists n\, B(n):=\bigcup_{n=0}^\infty B(n)$ and $\forall n\, B(n):=\bigcap_{n=0}^\infty B(n)$ and thus talk about the probability of quantified formulas (note that the same cannot be so easily done for parameters that are not countable).

We will repeatedly use the following facts, which represent a simple reformulation of standard continuity properties of the probability measure, but isolate in a logical context the relationship between quantification inside and outside of the measure. We say that a measurable formula $B(n)$ parametrised by $n\in\NN$ is monotone decreasing (in $n$) if $B(n)\supseteq B(n+1)$ for all $n\in\NN$, and monotone increasing if $B(n)\subseteq B(n+1)$ for all $n\in\NN$, in both cases with $B(n)$ viewed as a subset of $\Omega$.
\begin{lemma}
\label{lem:continuity}
Let $p\in [0,1]$ and suppose that $B(n)$ is monotone decreasing in $n\in\NN$. Then
\begin{enumerate}[(i)]

	\item $\PP(\forall n\, B(n))\geq p\iff \forall n\, (\PP(B(n))\geq p)$.\smallskip
	
	\item $\PP(\forall n\, B(n))\leq p\iff \forall \lambda>0\, \exists n\, (\PP(B(n))<p+\lambda)$.

\end{enumerate}
On the other hand, if $B(n)$ is monotone increasing then
\begin{enumerate}[(i)]

	\item[(iii)] $\PP(\exists n\, B(n))\leq p\iff \forall n\, (\PP(B(n))\leq p)$.\smallskip
	
	\item[(iv)] $\PP(\exists n\, B(n))\geq p\iff \forall \lambda>0\, \exists n\, (\PP(B(n))> p-\lambda)$.

\end{enumerate}
\end{lemma}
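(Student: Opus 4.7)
The plan is to reduce all four equivalences to the standard monotone continuity of the probability measure, combined with elementary $\varepsilon$-characterizations of infima and suprema. Recall that for a monotone decreasing sequence $B(n)$ of events one has $\PP(\forall n\, B(n)) = \PP\bigl(\bigcap_n B(n)\bigr) = \lim_n \PP(B(n)) = \inf_n \PP(B(n))$, while for monotone increasing $B(n)$ one has the dual identity $\PP(\exists n\, B(n)) = \PP\bigl(\bigcup_n B(n)\bigr) = \lim_n \PP(B(n)) = \sup_n \PP(B(n))$. Once these two identities are in hand, all four items of the lemma become purely real-analytic facts about sequences in $[0,1]$.

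For (i), I would first use monotonicity of $\PP$ to get $\PP(\forall n\, B(n)) \leq \PP(B(n))$ for each $n$, which immediately yields the $(\Rightarrow)$ direction. The converse is just the observation that if every $\PP(B(n)) \geq p$ then $\inf_n \PP(B(n)) \geq p$, and by continuity this infimum equals $\PP(\forall n\, B(n))$. For (ii), once one has $\PP(\forall n\, B(n)) = \inf_n \PP(B(n))$, the equivalence is the standard fact that $\inf_n a_n \leq p$ iff for every $\lambda > 0$ there exists $n$ with $a_n < p + \lambda$, applied to $a_n = \PP(B(n))$.

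Parts (iii) and (iv) are formally dual: under the correspondences $\forall \leftrightarrow \exists$, $\bigcap \leftrightarrow \bigcup$, $\inf \leftrightarrow \sup$, and reversal of inequalities, the arguments of (i) and (ii) transfer verbatim. The only point to watch is the placement of strict versus non-strict inequalities in (ii) and (iv): the outer non-strict bound ($\leq p$ or $\geq p$) on the limit is witnessed by a strict approximation from the relevant side ($< p + \lambda$ or $> p - \lambda$) by some finite-stage probability, which is exactly what the $\varepsilon$-characterization of inf and sup provides.

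There is no real obstacle here; the proof is essentially bookkeeping around monotone continuity of $\PP$. The only thing worth emphasising, since it is the whole point of stating the lemma in this form, is the logical reading of the equivalences: the probabilistic $\forall$ and $\exists$ inside the measure can be exchanged for uniform or approximate-witness versions outside the measure, and it is this move from infinitary to finitary quantification over events that will be exploited throughout the rest of the paper.
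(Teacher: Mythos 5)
Your proof is correct and is the standard argument: reduce everything to the identities $\PP(\bigcap_n B(n))=\inf_n\PP(B(n))$ and $\PP(\bigcup_n B(n))=\sup_n\PP(B(n))$ via monotone continuity of the measure, then invoke the $\varepsilon$-characterisations of $\inf$ and $\sup$. The paper itself does not include a proof of this lemma but instead cites it as Lemma~3.1 of Neri--Powell (martingale paper), so there is nothing to compare against in-text; your route is the natural one and the details check out, including the careful handling of strict versus non-strict inequalities in items (ii) and (iv).
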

This result is already given as \cite[Lemma 3.1]{neri-powell:25:martingale}, where a proof can be found.

\section{A zero-one law}
\label{sec:zeroone}

We consider a family $(B_{n,k})_{n,k\in\NN}$ of events indexed by a pair of natural numbers. A possible semantic interpretation is that $B_{n,k}\in \sigma(A_n,A_{n+1},\ldots,A_{k-1},A_k)$ for some sequence of independent events $(A_n)$, but we represent (and generalise) this syntactically by assuming that the following closure property is satisfied:
\[
B_{n,m}\cup B_{l,k}\subseteq B_{n,k} \mbox{ for all $n\leq m<l\leq k$},
\label{closure}\tag{P1}
\]
along with the following abstract condition, which generalises independence:
\[
\PP\left(\exists k\geq n\,  B_{n,k}\right)<1\implies \sum_{i=0}^\infty \PP\left(B_{a_i,b_i}\right)<\infty 
\label{ind}\tag{P2}
\]
for all $n\in\NN$ and sequences $n\leq a_0\leq b_0<a_1\leq b_1<\ldots$. In particular, it will be shown in Proposition \ref{prop:zeroone:ind} below that we can replace \eqref{ind} with the more concrete independence condition:
\[
\mbox{$B_{n,m}$ and $B_{l,k}$ are independent for all $n\leq m<l\leq k$.}
\label{pairwise}\tag{P3}
\]
For convenience we will assume that $B_{n,k}=\emptyset$ for $k<n$.
\begin{theorem}[Zero-one law]
\label{thm:zeroone}
Suppose that $(B_{n,k})$ satisfies \eqref{closure} and \eqref{ind}. Define
\[
B_\infty:=\forall n\, \exists k\geq n\, B_{n,k}=\bigcap_{n=0}^\infty\bigcup_{k=n}^\infty B_{n,k}.
\]
Then $\PP(B_\infty)\in \{0,1\}$.
\end{theorem}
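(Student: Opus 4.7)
The plan is to prove the contrapositive: assume $\PP(B_\infty)<1$ and deduce $\PP(B_\infty)=0$. Setting $D_n := \bigcup_{k\geq n} B_{n,k}$ so that $B_\infty = \bigcap_n D_n$, I would first harvest the two monotonicity statements implicit in \eqref{closure}: for $k\geq n$ one has $B_{n,k}\subseteq B_{n,k+1}$ (apply \eqref{closure} with $m=k$, $l=k+1$, terminal index $k+1$), and for $k\geq n+1$ one has $B_{n+1,k}\subseteq B_{n,k}$ (apply \eqref{closure} with $m=n$, $l=n+1$, terminal index $k$). The first gives $\PP(D_n)=\lim_{k}\PP(B_{n,k})$ by continuity from below, and the second makes $(D_n)$ decreasing, so $\PP(B_\infty)=\lim_n \PP(D_n)$ by continuity from above. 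Hence $\PP(B_\infty)<1$ supplies some $n_0$ with $\PP(D_{n_0})<1$, which activates the hypothesis of \eqref{ind}.

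The key step is to construct an admissible sequence $(a_i,b_i)$ along which $\PP(B_{a_i,b_i})$ approximates $\PP(D_{a_i})$. I would proceed inductively: set $a_0:=n_0$ and, given $a_i$, use $\PP(D_{a_i})=\lim_k\PP(B_{a_i,k})$ to pick $b_i\geq a_i$ with $\PP(B_{a_i,b_i})>\PP(D_{a_i})-2^{-i}$, then let $a_{i+1}:=b_i+1$. By construction $n_0\leq a_0\leq b_0<a_1\leq b_1<\cdots$, so \eqref{ind} yields $\sum_i \PP(B_{a_i,b_i})<\infty$, and in particular $\PP(B_{a_i,b_i})\to 0$. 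Combined with the approximation bound, this gives
\[
\PP(B_\infty)\;\leq\;\PP(D_{a_i})\;<\;\PP(B_{a_i,b_i})+2^{-i}\;\longrightarrow\;0,
\]
forcing $\PP(B_\infty)=0$.

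The main obstacle I anticipate is precisely what this construction is designed to overcome: the event $B_\infty$ has a $\liminf$-of-increasing-unions shape, whereas \eqref{ind} only supplies summability along sequences of pairwise-separated finite blocks $[a_i,b_i]$. A naive choice such as $a_i=b_i=n_0+i$ is inadequate, since in general $B_\infty$ need not be contained in $\limsup_i B_{n_0+i,n_0+i}$. Approximating each $D_{a_i}$ to within $2^{-i}$ by a single finite-interval event $B_{a_i,b_i}$, and then chaining the resulting intervals consecutively so that \eqref{ind} applies, is the natural way to reconcile the $\liminf$ structure of $B_\infty$ with the block structure demanded by the hypothesis.
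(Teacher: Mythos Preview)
Your proof is correct and follows essentially the same route as the paper: both arguments extract the two monotonicities from \eqref{closure}, use continuity to find some $n_0$ with $\PP(D_{n_0})<1$, and then build a chained sequence $a_0:=n_0$, $a_{i+1}:=b_i+1$ to trigger \eqref{ind}. The only cosmetic difference is that the paper argues by contradiction (assuming $\PP(B_\infty)\geq 2\varepsilon$ and choosing each $b_i$ so that $\PP(B_{a_i,b_i})\geq\varepsilon$, violating summability), whereas you give a direct squeeze by choosing $b_i$ with $\PP(B_{a_i,b_i})>\PP(D_{a_i})-2^{-i}$ and letting $i\to\infty$.
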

Under the aforementioned semantic interpretation, Theorem \ref{thm:zeroone} follows directly from the Kolmogorov zero-one law, as $B_\infty$ would clearly form a tail event. However, Theorem \ref{thm:zeroone} should ultimately be viewed as a new zero-one law for tail events that enjoy a particularly nice syntactic structure, generalising the Borel-Cantelli lemmas (cf. Example \ref{ex:bc}) to the point where they can also be applied in settings where the Kolmogorov zero-one law would normally be used. Indeed, as we will see in Section \ref{sec:examples}, our zero-one law is rich enough to cover a range of interesting scenarios.

\begin{proof}[Proof of Theorem \ref{thm:zeroone}]
Using the notation $B_n:=\bigcup_{k=n}^\infty B_{n,k}$, we first note that \eqref{closure} implies that $B_{n,k}\supseteq B_{n+1,k}$ and $B_{n,k}\subseteq B_{n,k+1}$ for all $n,k\in\NN$, and therefore
\begin{itemize}
	\item $B_\infty=\forall n\, \exists j\, B_{n,n+j}$ where $\exists j\, B_{n,n+j}$ is monotone decreasing in $n$, and
	
	\item $B_n=\exists j\, B_{n,n+j}$ where $B_{n,n+j}$ is monotone increasing in $j$.
\end{itemize}
Therefore if $\PP(B_\infty)=\PP(\forall n\, \exists j\, B_{n,n+j})<1$, by Lemma \ref{lem:continuity} (ii) there exists some $N\in\NN$ such that 
\[
\PP\left(\exists k\geq N\,  B_{N,k}\right)<1
\]
and therefore we have
\begin{equation}
\label{finitesum}
\sum_{i=0}^\infty \PP\left(B_{a_i,b_i}\right)<\infty 
\end{equation}
for all $N\leq a_0\leq b_0<a_1\leq b_1<\ldots$. Suppose for contradiction that $\PP(B_\infty)>0$. Then there exists some $\varepsilon>0$ such that $\PP(B_\infty)\geq 2\varepsilon$, and by Lemma \ref{lem:continuity} (i) followed by (iv) we have
\[
\forall n\exists k\geq n\left(\PP\left(B_{n,k}\right)\geq \varepsilon\right).
\]
Define the sequences $(a_i),(b_i)$ by $a_0:=N$ and $a_{i+1}:=b_i+1$ where $b_i\geq a_i$ is such that $\PP\left(B_{a_i,b_i}\right)\geq \varepsilon$. Then \eqref{finitesum} is clearly violated for this choice.
\end{proof}
The following result justifies our characterisation of \eqref{ind} as an abstract independence condition.
\begin{proposition}
\label{prop:zeroone:ind}
Theorem \ref{thm:zeroone} holds with \eqref{ind} replaced by \eqref{pairwise}.
\end{proposition}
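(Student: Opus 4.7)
My plan is to show that condition \eqref{pairwise} (together with the closure \eqref{closure} already available) implies the abstract independence condition \eqref{ind}, after which Proposition \ref{prop:zeroone:ind} is immediate from Theorem \ref{thm:zeroone}.

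Fix $n\in\NN$ and a sequence $n\leq a_0\leq b_0<a_1\leq b_1<\ldots$, and assume $\PP(\exists k\geq n\, B_{n,k})<1$. I first observe that the events $B_{a_0,b_0},B_{a_1,b_1},\ldots$ are \emph{pairwise independent}: given $i<j$ we have $a_i\leq b_i<a_j\leq b_j$, so \eqref{pairwise} applied with parameters $(a_i,b_i,a_j,b_j)$ yields the independence of $B_{a_i,b_i}$ and $B_{a_j,b_j}$. Second, from \eqref{closure} we already noted in the proof of Theorem \ref{thm:zeroone} that $B_{r,s}\supseteq B_{r+1,s}$, so $B_{a_i,b_i}\subseteq B_{n,b_i}$ for each $i$, and hence
\[
\bigcup_{i=0}^\infty B_{a_i,b_i}\subseteq \bigcup_{k=n}^\infty B_{n,k}=\exists k\geq n\, B_{n,k},
\]
which gives in particular $\PP(\limsup_i B_{a_i,b_i})\leq \PP(\exists k\geq n\, B_{n,k})<1$.

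At this point I would invoke the pairwise-independent form of the second Borel--Cantelli lemma due to Erd\H{o}s--R\'enyi, which states that for any sequence of pairwise independent events $(A_i)$, $\sum_i \PP(A_i)=\infty$ implies $\PP(\limsup_i A_i)=1$. Its contrapositive, applied to $(B_{a_i,b_i})$, yields $\sum_{i=0}^\infty \PP(B_{a_i,b_i})<\infty$, which is exactly \eqref{ind}. Hence Theorem \ref{thm:zeroone} applies and $\PP(B_\infty)\in\{0,1\}$.

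There is no real obstacle here: the only nontrivial ingredient is the Erd\H{o}s--R\'enyi strengthening of the second Borel--Cantelli lemma, which is classical, and the bookkeeping is entirely routine once one notices that the strict inequality $b_i<a_{i+1}$ built into the hypothesis of \eqref{ind} is precisely tuned to match the hypothesis of \eqref{pairwise}. It is worth remarking that pairwise independence (rather than full mutual independence) of the $B_{a_i,b_i}$ is genuinely all that the argument uses, which explains why the slightly weaker-looking \eqref{pairwise} is already sufficient.
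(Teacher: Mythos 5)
Your proof is correct, but it takes a genuinely different route from the paper's. You verify (P2) from (P1) and (P3) by noting that the $B_{a_i,b_i}$ are pairwise independent, observing via (P1) that $\limsup_i B_{a_i,b_i}\subseteq\exists k\geq n\,B_{n,k}$, and then invoking the pairwise-independent second Borel--Cantelli lemma (Erd\H{o}s--R\'enyi) as a black box. That is a valid derivation of (P2), and your closing observation is a genuine insight: all the qualitative argument needs is pairwise independence of the non-overlapping blocks $B_{a_i,b_i}$, which is in fact strictly less than what (P3) asserts (since (P3) also covers pairs such as $B_{a_i,b_i}$ and $B_{a_{i+1},b_j}$ with nested index ranges). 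The paper instead proves (P2) directly and elementarily: using (P1) to get $\bar{B}_{a_0,b_j}\subseteq\bar{B}_{a_0,b_0}\cap\bar{B}_{a_1,b_j}$, then (P3) to factor the probability, iterating to obtain $\PP(\bar{B}_{a_0,b_j})\leq\prod_{i=0}^j(1-\PP(B_{a_i,b_i}))\leq\exp(-\sum_{i=0}^j\PP(B_{a_i,b_i}))$, and passing to the limit. What the paper's self-contained calculation buys, and what your black-box reduction does not, is a quantitative estimate that is reused verbatim in Corollary \ref{cor:independent}: the bound $\lambda\leq\exp(-\sum_{i=0}^j\PP(B_{a^{r,g}_i,b^{r,g}_i}))$ on a finite range is exactly what yields the finitary moduli $s=b^{r,g}_{\lfloor\log(1/\lambda)/\varepsilon\rfloor+1}$ and the threshold $\tilde g^{(\lfloor\log(1/\lambda)/\varepsilon\rfloor+1)}(r)$. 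To finitize your version one would have to unpack a quantitative Erd\H{o}s--R\'enyi lemma in turn -- which is precisely what the paper does separately in Section \ref{sec:erdosrenyi}, where the second-moment argument is analysed and produces a different (and less clean) witnessing term via condition (P2$''$). So both routes are sound, but the paper's elementary one is deliberately chosen to keep the quantitative content simple and local.
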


\begin{proof}
To see that \eqref{ind} follows from \eqref{pairwise} in the presence of \eqref{closure}, observe that for any $n\leq a_0\leq b_0<a_1\leq b_1<\ldots<a_j\leq b_j$, using \eqref{closure} and \eqref{pairwise} we have
\[
\PP\left(\bar{B}_{a_i,b_j}\right)\leq \PP\left(\bar{B}_{a_i,b_i}\cap \bar{B}_{a_{i+1},b_j}\right)=\PP\left(\bar{B}_{a_i,b_i}\right)\PP\left(\bar{B}_{a_{i+1},b_j}\right)
\]
for all $0\leq i<j$, and thus by induction
\[
\PP\left(\bar{B}_{a_0,b_j}\right)\leq \prod_{i=0}^j \PP\left(\bar{B}_{a_i,b_i}\right)=\prod_{i=0}^j\left(1-\PP\left(B_{a_i,b_i}\right)\right).
\]
Using the inequality $1+x\leq \exp(x)$ for all $x\in \RR$, we see that
\[
\PP\left(\bar{B}_{a_0,b_j}\right)\leq \prod_{i=0}^j\exp\left(-\PP\left(B_{a_i,b_i}\right)\right)=\exp\left(-\sum_{i=0}^j \PP\left(B_{a_i,b_i}\right)\right)
\]
and thus for arbitrary $j\in\NN$ we have
\[
\PP(\forall k\geq n\, \bar{B}_{n,k})\leq \PP(\bar{B}_{n,b_j})\leq \PP(\bar{B}_{a_0,b_j})\leq \exp\left(-\sum_{i=0}^j \PP\left(B_{a_i,b_i}\right)\right)
\]
and therefore if $\sum_{i=0}^\infty \PP\left(B_{a_i,b_i}\right)=\infty$ then $\PP(\forall k\geq n\, \bar{B}_{n,k})=0$, or in other words, $\PP(\exists k\geq n\, B_{n,k})=1$, and so \eqref{ind} has been established.
\end{proof} 
\begin{example}
\label{ex:bc}
Given a sequence of events $(A_j)_{j\in\NN}$, the family $B_{n,k}:=\bigcup_{j=n}^k A_j$ satisfies \eqref{closure}, and in this instance we have
\[
B_\infty=\forall n\, \exists k\geq n\, A_k=\{\text{$A_j$ occurs infinitely often}\}.
\]
If, furthermore, the $(A_j)$ are mutually independent, then \eqref{pairwise} holds and thus by Theorem \ref{thm:zeroone} via Proposition \ref{prop:zeroone:ind} we have 
\[
\PP\left(\text{$A_j$ occurs infinitely often}\right)\in \{0,1\}.
\]
In this sense, Theorem \ref{thm:zeroone} contains the first and second Borel-Cantelli lemmas.
\end{example}

\section{A finitisation of Theorem \ref{thm:zeroone}}
\label{sec:finitisation}

We now motivate, prove, and discuss our quantitative zero-one law, which represents a finitary version of Theorem \ref{thm:zeroone} in the sense of Tao \cite{tao:07:softanalysis}. Informally speaking, this forms an `approximate' zero-one law which asserts that an approximation of the event $B_\infty$ in terms of finite unions and intersections occurs with probability close to zero or one. This finitisation is achieved by combining continuity arguments that allow us to draw quantifiers outside of the probability measure, with G\"odel's (classical, monotone) Dialectica interpretation, which provides us with a recipe for Herbrandising the resulting formula in a way that explicit quantitative bounds can be guaranteed.

That the Dialectica interpretation can be used to systematically derive finitisations of infinitary analytic statements is well-known \cite{kohlenbach:08:book} (see also \cite{gaspar-kohlenbach:10:pigeonhole} for a particularly interesting case study in this regard), though the general applicability of Dialectica in the extremely subtle setting of probability theory has only recently begun to be explained from a proof-theoretic perspective in \cite{neri-pischke:pp:formal}.

However, we present our finitary zero-one law without making explicit the underlying logical methodology. In particular, we do not give the definition of the Dialectica interpretation, nor exact details on how it is applied: Neither of these is necessary to understand or prove the main results that follow. Instead we aim to provide just enough intuition that the reader not familiar with proof mining or Dialectica can see how the finitary version relates to the original zero-one law.\\

To begin the process of finitization, our first step is to formulate the main components of Theorem \ref{thm:zeroone} in terms of finite unions and intersections of events, bringing quantifiers from inside to outside the probability measures. 
\begin{lemma}
\label{lem:contents}
Whenever $(B_{n,k})$ satisfies \eqref{closure}, the statement \eqref{ind} is equivalent to the property that
	\begin{equation}
\exists \lambda\in (0,1)\, \forall k\geq n\left(\PP(B_{n,k})\leq 1-\lambda\right)\implies \exists x>0\, \forall t\left(\sum_{i=0}^t \PP\left(B_{a_i,b_i}\right)<x\right),
	\label{ind:quant}
	\end{equation}
for all $n\in\NN$ and $n\leq a_0\leq b_1<a_1\leq b_1<\ldots$, and the statement $\PP(B_\infty)\in \{0,1\}$ is equivalent to the formula
\begin{equation}
\forall\varepsilon\in (0,1)\, \exists n\, \forall m\left(\PP(B_{n,m})<\varepsilon\right)\vee \forall \lambda\in (0,1)\, \forall r\, \exists k\left(\PP(B_{r,k})>1-\lambda\right).
\label{ZO}
\end{equation}
\end{lemma}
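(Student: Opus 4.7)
The plan is to prove both equivalences by mechanical applications of Lemma~\ref{lem:continuity}, exploiting the two monotonicity consequences of \eqref{closure} that were already extracted in the proof of Theorem~\ref{thm:zeroone}: namely, $B_{n,k}$ is monotone increasing in $k$, while $\bigcup_{k\geq n}B_{n,k}$ is monotone decreasing in $n$. Each quantifier appearing outside the probability in \eqref{ind:quant} and \eqref{ZO} arises by pulling a corresponding quantifier out from inside the measure via one of the four cases of Lemma~\ref{lem:continuity}.

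For the first equivalence, I would rewrite the hypothesis and the conclusion of \eqref{ind} separately. Since $B_{n,k}$ is monotone increasing in $k$, Lemma~\ref{lem:continuity}(iii) gives that $\PP(\exists k\geq n\, B_{n,k})\leq 1-\lambda$ iff $\forall k\geq n\,(\PP(B_{n,k})\leq 1-\lambda)$, so the strict inequality $\PP(\exists k\geq n\, B_{n,k})<1$ is equivalent to the existence of some $\lambda\in(0,1)$ for which this bound holds uniformly in $k$---which is exactly the hypothesis of \eqref{ind:quant}. The equivalence between the conclusions is then immediate: convergence of a series of non-negative terms is the same as boundedness of its partial sums, so $\sum_{i=0}^\infty\PP(B_{a_i,b_i})<\infty$ iff there is an $x>0$ bounding every partial sum.

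For the second equivalence, I would show that the two disjuncts of \eqref{ZO} correspond respectively to $\PP(B_\infty)=0$ and $\PP(B_\infty)=1$. Writing $B_\infty=\forall n\,\exists k\geq n\, B_{n,k}$, the condition $\PP(B_\infty)=0$ unfolds via Lemma~\ref{lem:continuity}(ii) with $p=0$ into $\forall\varepsilon>0\,\exists n\,(\PP(\exists k\geq n\, B_{n,k})<\varepsilon)$, and a further application of Lemma~\ref{lem:continuity}(iii) collapses the inner probability to yield the first disjunct of \eqref{ZO}---the convention $B_{n,m}=\emptyset$ for $m<n$ absorbs the missing constraint $m\geq n$. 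Dually, Lemma~\ref{lem:continuity}(i) with $p=1$ rewrites $\PP(B_\infty)=1$ as $\forall r\,(\PP(\exists k\geq r\, B_{r,k})=1)$, and Lemma~\ref{lem:continuity}(iv) then produces the second disjunct. The only mildly fiddly point throughout is the interchange between strict and non-strict inequalities, since Lemma~\ref{lem:continuity} is stated with $\leq$ and $\geq$ whereas \eqref{ind:quant} and \eqref{ZO} use strict inequalities; this is handled by a routine $\varepsilon/2$ perturbation and requires no new ideas.
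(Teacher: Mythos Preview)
Your proposal is correct and follows essentially the same approach as the paper's own proof: both arguments reduce everything to repeated applications of Lemma~\ref{lem:continuity} using the monotonicity properties induced by \eqref{closure}, and the paper handles the strict/non-strict inequality issue via exactly the kind of perturbation you describe (introducing an auxiliary $\mu\in(0,1)$ with $\PP(\exists j\, B_{n,n+j})\leq\varepsilon-\mu$).
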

\begin{proof}
Repeated applications of Lemma \ref{lem:continuity}, as in the proof of Theorem \ref{thm:zeroone}. The first part follows from observing that
\begin{align*}
\PP(\exists k\geq n\, B_{n,k})<1&\iff \exists \lambda\in (0,1)\, \left(\PP(\exists j\, B_{n,n+j})\leq 1-\lambda\right)\\
&\iff \exists \lambda\in (0,1)\,\forall j\, \left(\PP(B_{n,n+j})\leq 1-\lambda\right)\\
&\iff \exists \lambda\in (0,1)\,\forall k\geq n \left(\PP(B_{n,k})\leq 1-\lambda\right).
\end{align*}
For the second, we first note that
\begin{align*}
\PP(B_\infty)=0&\iff \PP(\forall n\, \exists j\, B_{n,n+j})=0\\
&\iff \forall\varepsilon\in (0,1)\, \exists n\left(\PP(\exists j\, B_{n,n+j})<\varepsilon\right),
\end{align*}
and then
\begin{align*}
&\forall\varepsilon\in (0,1)\, \exists n\left(\PP(\exists j\, B_{n,n+j})<\varepsilon\right)\\
&\iff \forall\varepsilon\in (0,1)\, \exists n,\mu\in (0,1)\left(\PP(\exists j\, B_{n,n+j})\leq\varepsilon-\mu\right)\\
&\iff \forall\varepsilon\in (0,1)\, \exists n,\mu\in (0,1)\, \forall j\left(\PP(B_{n,n+j})\leq\varepsilon-\mu\right)\\
&\implies \forall\varepsilon\in (0,1)\, \exists n\, \forall m\left(\PP(B_{n,m})<\varepsilon\right)
\end{align*}
(where for the last step we recall  that $B_{n,m}=\emptyset$ for $m<n$), and
\begin{align*}
&\forall\varepsilon\in (0,1)\, \exists n\, \forall m\left(\PP(B_{n,m})<\varepsilon\right)\\
&\implies \forall\varepsilon\in (0,1)\, \exists n\left(\PP(\exists j\, B_{n,n+j})\leq\varepsilon\right)\\
&\iff \forall\varepsilon\in (0,1)\, \exists n\left(\PP(\exists j\, B_{n,n+j})< \varepsilon\right).
\end{align*}
Finally, we have
\begin{align*}
\PP(B_\infty)=1&\iff \PP(\forall r\, \exists j\, B_{r,r+j})=1\\
&\iff \forall r\left(\PP(\exists j\, B_{r,r+j})=1\right)\\
&\iff \forall r\, \forall \lambda\in (0,1)\, \exists j \left(\PP(B_{r,r+j})>1-\lambda\right)\\
&\iff \forall r\, \forall \lambda\in (0,1)\, \exists k \left(\PP(B_{r,k})>1-\lambda\right),
\end{align*}
and the result follows.
\end{proof}
Our second step is to give a computational interpretation to \eqref{ind:quant} and \eqref{ZO}, bringing the quantifiers, already removed from the probability measures, to the front of the formula, and here we make use of the Dialectica interpretation. We do not give full details, as the resulting finitary version will in any case be given below, but rather focus on motivating the interpretation of the (more complex) formula \eqref{ZO}. Here we make use of Dialectica in the form of the Shoenfield variant \cite{shoenfield:67:book}, whereby \eqref{ZO} becomes first
\[
\forall\varepsilon\in (0,1)\, \forall g\, \exists n\left(\PP(B_{n,g(n)})<\varepsilon\right)\vee \forall \lambda\in (0,1)\, \forall r\, \exists k\left(\PP(B_{r,k})>1-\lambda\right)
\]
and finally
\begin{equation}
\forall \varepsilon,\lambda\in (0,1)\, \forall r\, \forall g:\NN\to\NN\, \exists k,n\left(\PP(B_{n,g(n)})< \varepsilon\vee \PP(B_{r,k})> 1-\lambda\right)
\label{mZO}.
\end{equation}
Despite the formal apparatus invoked in the passage from \eqref{ZO} to \eqref{mZO}, the equivalence of these two formulas (over classical logic) can be proven directly:
\begin{lemma}
\label{lem:mzo}
Statements \eqref{ZO} and \eqref{mZO} are equivalent.
\end{lemma}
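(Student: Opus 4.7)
The plan is to establish the two implications \eqref{ZO} $\Rightarrow$ \eqref{mZO} and \eqref{mZO} $\Rightarrow$ \eqref{ZO} directly, without invoking the Dialectica machinery. The forward direction is essentially bookkeeping; the reverse direction requires a classical (non-constructive) choice of a Skolem function $g$, which is exactly the witness the monotone Dialectica extracts.

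For \eqref{ZO} $\Rightarrow$ \eqref{mZO}, I would fix arbitrary $\varepsilon,\lambda\in(0,1)$, $r\in\NN$ and $g:\NN\to\NN$, and do a case split on \eqref{ZO}. If the left disjunct holds, there is some $n$ with $\PP(B_{n,m})<\varepsilon$ for every $m$; taking $m=g(n)$ and letting $k$ be arbitrary (e.g.\ $k=0$) supplies the witnesses $(k,n)$ required by \eqref{mZO}. If instead the right disjunct holds, specialising it to the given $\lambda$ and $r$ yields some $k$ with $\PP(B_{r,k})>1-\lambda$, and any $n$ then works. Either way the inner disjunction in \eqref{mZO} is satisfied.

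For \eqref{mZO} $\Rightarrow$ \eqref{ZO}, I would argue classically. Suppose \eqref{ZO} fails: then there are $\varepsilon_0,\lambda_0\in(0,1)$ and $r_0\in\NN$ such that (a) for every $n$ there exists some $m$ with $\PP(B_{n,m})\geq\varepsilon_0$, and (b) for every $k$, $\PP(B_{r_0,k})\leq 1-\lambda_0$. Using (a), classical choice produces a function $g:\NN\to\NN$ with $\PP(B_{n,g(n)})\geq\varepsilon_0$ for every $n$. Now instantiate \eqref{mZO} with these $\varepsilon_0,\lambda_0,r_0,g$: it yields $k,n$ for which either $\PP(B_{n,g(n)})<\varepsilon_0$ or $\PP(B_{r_0,k})>1-\lambda_0$, contradicting (a) or (b) respectively.

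The only delicate point is the use of classical choice to form the counter-witness $g$ in the reverse direction: this is precisely the step that cannot be performed constructively, and it mirrors exactly the role that the function quantifier $\forall g$ plays when \eqref{ZO} is passed through the Shoenfield/Dialectica translation. Everything else is just unpacking the two alternatives in the disjunctions.
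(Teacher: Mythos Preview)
Your proposal is correct and takes essentially the same approach as the paper. The paper compresses both directions into a single equivalence by negating \eqref{ZO}, Skolemising the $\forall n\,\exists m$ block to introduce $g$, and then negating again to obtain \eqref{mZO}; your reverse direction is exactly this contrapositive argument, and your forward direction is the (trivial) unpacking that the paper leaves implicit in the word ``equivalent''.
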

\begin{proof}
The negation of \eqref{ZO} is equivalent to
\[
\exists\varepsilon\in (0,1)\, \forall n\, \exists m\left(\PP(B_{n,m})\geq\varepsilon\right)\wedge \exists \lambda\in (0,1)\, \exists r\, \forall k\left(\PP(B_{r,k})\leq 1-\lambda\right).
\]
Bringing the quantifiers to the outside, and introducing a function $g$ for witnessing $m$ in terms of $n$, we obtain the equivalent statement
\[
\exists \varepsilon,\lambda\in (0,1)\, \exists r\, \exists g:\NN\to\NN\, \forall k,n\left(\PP(B_{n,g(n)})\geq \varepsilon\wedge \PP(B_{r,k})\leq 1-\lambda\right)
\]
and negating once more we arrive at \eqref{mZO}.
\end{proof} 
The intention here is that \eqref{mZO} bears the same relationship to \eqref{ZO} as metastable convergence (in the sense of Tao \cite{tao:07:softanalysis}):
\begin{equation}
\forall \varepsilon>0\, \forall g:\NN\to\NN\, \exists n\, \forall i,j\in [n;g(n)](|x_i-x_j|<\varepsilon)
\label{metastable}
\end{equation}
bears to the usual Cauchy property:
\[
\forall \varepsilon>0\, \exists n\, \forall i,j\geq n(|x_i-x_j|<\varepsilon).
\]
It is a well-known and widely exploited phenomenon that while in many situations, computable rates of Cauchy convergence do not exist, it is usually possible to extract bounds for $\exists n$ in \eqref{metastable} which are typically simple and highly uniform (see \cite{kohlenbach:19:nonlinear:icm} for a reasonably recent list of examples). In the remainder of the section we show that the same is true for Theorem \ref{thm:zeroone}. 

\begin{remark}[For proof theorists]
This comparison to the nonstochastic setting must be carefully qualified. In contrast to ordinary, nonstochastic metastability, our route from the initial zero-one law to its metastable version involves the additional step of taking quantifiers out of the probability measure (Lemma \ref{lem:contents}), using the equivalences of presented in Lemma \ref{lem:continuity}. While the latter are completely elementary within the context of ordinary probability theory, from a logical point of view they can only be done in a system capable of reasoning about countable unions, which turn out to be set-theoretically subtle. For example, an investigation into the Lesbegue measure in the context of higher-order reverse mathematics utilises a uniform variant of arithmetical comprehension in the form of Feferman's $\mu$ functional \cite{kreuzer:15:measure}. Similarly, in the recent formal system for probability given in \cite{neri-pischke:pp:formal}, in order to guarantee a tame treatment, infinite unions are formulated intensionally and with their charaterising axiom given in rule form. As such, there is currently no general, theoretical justification for the notion that a metastable principle arrived at by using countable unions in a fundamental way can be given a computational interpretation of low complexity\footnote{A systematic approach towards the computational interpretation of probabilistic statements in light of recent logical metatheorems \cite{neri-pischke:pp:formal} will be provided in a forthcoming paper by Neri, Oliva and Pischke.}. That this is indeed possible for us, whereby we seemingly avoid the potential blowup in complexity triggered by invoking continuity principles of the measure (as is also the case in e.g.\ martingale convergence \cite{neri-powell:25:martingale}), is due to the fact that the \emph{proof} of Theorem \ref{thm:zeroone} is fundamentally uniform in nature i.e.\ can be formulated with the quantifiers taken outside of the measure and without invoking any strong set-theoretic principles. Though our finitary zero-one law can be understood without reference to any of these issues, we mention them simply to stress that a obtaining a thorough, proof-theoretic understanding of probability theory is an ongoing effort, and our finitary zero-one law benefits from uniformities that are not necessarily guaranteed for general theorems of probability (but which are nevertheless often present).
\end{remark}

The quantitative results given by Theorem \ref{thm:main} below and its corollaries represent a procedure for converting witnesses for the Dialectica interpretation of the main assumption \eqref{ind:quant} into bounds for witnesses of the metastable zero-one property \eqref{mZO}, though we now state and prove the result without any reference to proof theoretic methods. The fact that we ask for \emph{bounds} rather than exact witnesses allows us to formulate our quantitative results in a simple and highly uniform way, a phenomenon that can made precise through the monotone variant of the Dialectica (see \cite{kohlenbach:08:book}, and \cite{neri-pischke:pp:formal} for the specific case of probability). The results are finitary in that, while being equivalent to Theorem \ref{thm:zeroone} (cf.\ Remark \ref{rem:equiv} below), both the main assumption \eqref{ind} and the conclusion $\PP(B_\infty)\in \{0,1\}$ now only involve a finite part of the input $(B_{n,k})$. Furthermore, the assumption \eqref{closure} is not initially needed and only comes back into play when we replace the \eqref{ind} with a concrete independence condition. Much of the technical work has already been done in arriving at the correct statement of the theorem, and as a result the proof is completely elementary. 

\begin{theorem}[Finitary zero-one law]
\label{thm:main}
Let $(B_{n,k})$ be arbitrary. Fix $\varepsilon,\lambda\in (0,1)$, $r\in\NN$ and $g:\NN\to\NN$ with $g(k)\geq k$ for all $k\in\NN$, and define $a_0^{r,g}\leq b_0^{r,g}<a_1^{r,g}\leq b_1^{r,g}\leq \ldots$ by
\[
a^{r,g}_i:=\tilde g^{(i)}(r) \ \ \ \mbox{and} \ \ \ b^{r,g}_i:=g(a^{r,g}_i)
\]
for $\tilde g(j):=g(j)+1$. Then whenever $x>0$ and $s\geq r$ are such that
\[
\PP(B_{r,s})\leq 1-\lambda\implies \sum_{i=0}^{\lfloor x/\varepsilon\rfloor} \PP\left(B_{a^{r,g}_i,b^{r,g}_i}\right)<x,
	\label{ind:finitary}\tag{P2$'$}
\]
either 
\[
\exists n\leq \tilde g^{(\lfloor x/\varepsilon\rfloor)}(r)\left(\PP\left(B_{n,g(n)}\right)<\varepsilon\right) \ \ \ \mbox{or} \ \ \ 1-\lambda<\PP\left(B_{r,s}\right).
\]
\end{theorem}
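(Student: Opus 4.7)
My plan is to prove the theorem by contradiction, essentially finitising the pigeonhole-style argument at the end of the proof of Theorem~\ref{thm:zeroone}. Assume that both disjuncts in the conclusion fail, so that
\[
\PP\bigl(B_{n,g(n)}\bigr)\geq \varepsilon \ \ \text{for every } n\leq \tilde g^{(\lfloor x/\varepsilon\rfloor)}(r), \qquad \PP\bigl(B_{r,s}\bigr)\leq 1-\lambda.
\]
The second inequality activates the hypothesis \eqref{ind:finitary} for the given $x$ and $s$, delivering the upper bound $\sum_{i=0}^{\lfloor x/\varepsilon\rfloor}\PP\bigl(B_{a_i^{r,g},b_i^{r,g}}\bigr)<x$.

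The remaining work is to show that the same sum must in fact strictly exceed $x$, so that a contradiction is reached. Since $\tilde g(j)=g(j)+1>j$, the sequence $a_i^{r,g}=\tilde g^{(i)}(r)$ is strictly increasing in $i$, so for all $i\leq \lfloor x/\varepsilon\rfloor$ one has $a_i^{r,g}\leq a_{\lfloor x/\varepsilon\rfloor}^{r,g}=\tilde g^{(\lfloor x/\varepsilon\rfloor)}(r)$, which means the index $n=a_i^{r,g}$ falls inside the range covered by the first contradiction hypothesis. Since $b_i^{r,g}=g(a_i^{r,g})$ by definition, I can conclude $\PP\bigl(B_{a_i^{r,g},b_i^{r,g}}\bigr)\geq \varepsilon$ for each of the $\lfloor x/\varepsilon\rfloor+1$ summands, whence the total sum is at least $(\lfloor x/\varepsilon\rfloor+1)\varepsilon>x$, contradicting the upper bound above.

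I do not anticipate any substantive obstacle: as the surrounding discussion already stresses, essentially all of the conceptual work sits in Lemmas~\ref{lem:contents} and \ref{lem:mzo} and in the Dialectica-guided choice of the witnessing sequence $(a_i^{r,g},b_i^{r,g})$. Once these are in place, the argument is a direct counting computation. The only place where care is needed is the indexing: the iteration depth $\lfloor x/\varepsilon\rfloor$ has been chosen precisely so that the rational inequality $(\lfloor x/\varepsilon\rfloor+1)\varepsilon>x$ matches the summation bound in \eqref{ind:finitary} with the quantifier bound in the first disjunct of the conclusion, and checking that these line up is what makes the parameters $s$ and $x$ appear as free inputs rather than being derived from $g$.
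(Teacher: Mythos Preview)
Your proposal is correct and follows essentially the same route as the paper: the paper assumes $\PP(B_{r,s})\leq 1-\lambda$ and derives the left disjunct by noting that otherwise each summand $\PP(B_{a^{r,g}_i,b^{r,g}_i})$ would be at least $\varepsilon$, forcing $\sum_{i=0}^{\lfloor x/\varepsilon\rfloor}\PP(B_{a^{r,g}_i,b^{r,g}_i})\geq(\lfloor x/\varepsilon\rfloor+1)\varepsilon\geq x$ in contradiction with \eqref{ind:finitary}. Your version packages this as a global contradiction rather than a case split, but the computation and the use of $a^{r,g}_i\leq \tilde g^{(\lfloor x/\varepsilon\rfloor)}(r)$ together with $b^{r,g}_i=g(a^{r,g}_i)$ are identical.
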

\begin{proof}
Whenever $\PP\left(B_{r,s}\right)\leq 1-\lambda$, there exists some $i\leq \lfloor x/\varepsilon\rfloor$ such that $\PP\left(B_{a^{r,g}_i,b^{r,g}_i}\right)<\varepsilon$, since if this were not true, then we would have
\[
\sum_{i=0}^{\lfloor x/\varepsilon\rfloor} \PP\left(B_{a^{r,g}_i,b^{r,g}_i}\right)\geq \sum_{i=0}^{\lfloor x/\varepsilon\rfloor} \varepsilon=(\lfloor x/\varepsilon\rfloor+1)\varepsilon\geq x
\]
contradicting \eqref{ind:finitary}. The result then follows, since $b^{r,g}_i=g(a^{r,g}_i)$ and $a^{r,g}_i\leq a^{r,g}_{\lfloor x/\varepsilon\rfloor}=\tilde g^{(\lfloor x/\varepsilon\rfloor)}(r)$. 
\end{proof}

The following variant of our finitary zero-one law now assumes that we have a (uniform) witness for the Dialectica interpretation of \eqref{ind:quant}, allowing us to produce concrete witnesses for the conclusion for any parameters.

\begin{corollary}
\label{cor:implication}
Let $(B_{n,k})$ be an arbitrary, and suppose that $\rho:\NN\times (0,1)\to (0,\infty)$ and $\sigma:\NN^2\times (0,1)\to \NN$ with $\sigma(N,n,\lambda)\geq n$ for all $n\in\NN$ satisfy
\[
\PP\left(B_{n,\sigma(N,n,\lambda)}\right)\leq 1-\lambda\implies \forall t\left(b_t\leq N\implies \sum_{i=0}^t\PP\left(B_{a_i,b_i}\right)<\rho(n,\lambda)\right)
\label{ind:dial}\tag{P2$''$}
\]
for all $n,N\in\NN$ and $\lambda\in (0,1)$, and uniformly over sequences $n\leq a_0\leq b_0<a_1\leq b_1<\ldots$. Then for any $\varepsilon,\lambda\in (0,1)$, $r\in\NN$ and $g:\NN\to\NN$ with $g(k)\geq k$ for all $k\in\NN$, either 
\[
\exists n\leq \tilde g^{(\lfloor \rho(r,\lambda)/\varepsilon\rfloor)}(r)\left(\PP\left(B_{n,g(n)}\right)\right)<\varepsilon \ \ \ \mbox{or} \ \ \ 1-\lambda<\PP\left(B_{r,\sigma(b^{r,g}_{\lfloor \rho(r,\lambda)/\varepsilon\rfloor},r,\lambda)}\right).
\]
for $(b^{r,g}_n)$ defined as in Theorem \ref{thm:main}.
\end{corollary}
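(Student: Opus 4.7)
The plan is to reduce Corollary \ref{cor:implication} to a direct application of Theorem \ref{thm:main} by making the correct instantiation of its parameters $x$ and $s$ using the uniform witnesses $\rho$ and $\sigma$ provided in the hypothesis \eqref{ind:dial}. Essentially, $\rho$ gives us the value of $x$, while $\sigma$ together with the sequence $(b^{r,g}_i)$ produced by Theorem \ref{thm:main} gives us $s$.

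Concretely, fix $\varepsilon,\lambda\in(0,1)$, $r\in\NN$ and $g:\NN\to\NN$ with $g(k)\geq k$. The first step is to set $x:=\rho(r,\lambda)$ and to consider the sequences $a^{r,g}_i$, $b^{r,g}_i$ defined as in Theorem \ref{thm:main}; then I would set
\[
N:=b^{r,g}_{\lfloor \rho(r,\lambda)/\varepsilon\rfloor} \quad \text{and} \quad s:=\sigma(N,r,\lambda).
\]
With these choices, $s\geq r$ and the sequence $r\leq a^{r,g}_0\leq b^{r,g}_0<a^{r,g}_1\leq b^{r,g}_1<\ldots$ is one of the admissible sequences over which \eqref{ind:dial} quantifies.

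The key step is then to verify that the condition \eqref{ind:finitary} of Theorem \ref{thm:main} holds with these choices of $x$ and $s$. Suppose $\PP(B_{r,s})\leq 1-\lambda$. Invoking \eqref{ind:dial} with $n=r$, the chosen $N$, and the sequences $a_i:=a^{r,g}_i$, $b_i:=b^{r,g}_i$, I obtain that for every $t$ with $b^{r,g}_t\leq N$,
\[
\sum_{i=0}^{t}\PP\left(B_{a^{r,g}_i,b^{r,g}_i}\right)<\rho(r,\lambda).
\]
Taking $t:=\lfloor \rho(r,\lambda)/\varepsilon\rfloor$ gives $b^{r,g}_t=N$, so the required inequality $\sum_{i=0}^{\lfloor x/\varepsilon\rfloor}\PP(B_{a^{r,g}_i,b^{r,g}_i})<x$ is satisfied.

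Having verified the hypothesis of Theorem \ref{thm:main}, the conclusion of that theorem gives exactly the stated disjunction, with the bound $\tilde g^{(\lfloor x/\varepsilon\rfloor)}(r)=\tilde g^{(\lfloor \rho(r,\lambda)/\varepsilon\rfloor)}(r)$ on $n$, and with $s=\sigma(b^{r,g}_{\lfloor \rho(r,\lambda)/\varepsilon\rfloor},r,\lambda)$. I do not anticipate any real obstacle: all the substantive combinatorial work is already encapsulated in Theorem \ref{thm:main}, and the only care needed is in correctly threading the witness $N$ through so that the truncation point of the sum in \eqref{ind:dial} agrees with the cutoff $\lfloor x/\varepsilon\rfloor$ demanded by Theorem \ref{thm:main}.
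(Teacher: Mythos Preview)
Your proposal is correct and follows exactly the same route as the paper: set $x:=\rho(r,\lambda)$, $N:=b^{r,g}_{\lfloor x/\varepsilon\rfloor}$, $s:=\sigma(N,r,\lambda)$, and invoke Theorem~\ref{thm:main}. The paper's proof is a single sentence recording this instantiation, while you have additionally spelled out the verification of \eqref{ind:finitary} from \eqref{ind:dial}, which is perfectly fine.
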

\begin{proof}
Directly from Theorem \ref{thm:main}, setting $x:=\rho(r,\lambda)$ and $s:=\sigma(N,r,\lambda)$ for $N:=b^{r,g}_{\lfloor x/\varepsilon\rfloor}=b^{r,g}_{\lfloor\rho(r,\lambda)/\varepsilon\rfloor}$.
\end{proof}

Finally, we provide a finitary version of Proposition \ref{prop:zeroone:ind}, whose premises are trivially satisfied when \eqref{closure} and \eqref{pairwise} hold universally, but which makes explicit that these properties are in fact only required to hold on finite ranges for any fixed degree of accuracy.

\begin{corollary}
\label{cor:independent}
Let $(B_{n,k})$ be arbitrary. Fix $\varepsilon,\lambda\in (0,1)$, $r\in\NN$ and $g:\NN\to\NN$ with $g(k)\geq k$ for all $k\in\NN$, and define $r\leq a_0^{r,g}\leq b_0^{r,g}<a_1^{r,g}\leq b_1^{r,g}\leq \ldots$ as in Theorem \ref{thm:main}. Then whenever
\begin{itemize}

	\item $B_{n,m}\cup B_{l,k}\subseteq B_{n,k}$ and
	
	\item $B_{n,m}$ and $B_{l,k}$ are independent

\end{itemize}
for all $r\leq n\leq m<l\leq k\leq s:=b^{r,g}_{\lfloor \log(1/\lambda)/\varepsilon\rfloor+1}$, either
\[
\exists n\leq \tilde g^{(\lfloor \log(1/\lambda)/\varepsilon\rfloor+1)}(r)\left(\PP\left(B_{n,g(n)}\right)<\varepsilon\right) \ \ \ \mbox{or} \ \ \ 1-\lambda<\PP\left(B_{r,s}\right).
\]
\end{corollary}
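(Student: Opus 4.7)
The plan is to reduce the corollary directly to Theorem \ref{thm:main} by verifying the finitary independence condition \eqref{ind:finitary} from the assumed local closure and independence. Specifically, I would set $M:=\lfloor \log(1/\lambda)/\varepsilon\rfloor+1$ (so that $M\varepsilon>\log(1/\lambda)$), then take $x:=M\varepsilon$ and $s:=b^{r,g}_M$ as the parameters with which to apply Theorem \ref{thm:main}. Since $x/\varepsilon=M$ is an integer, $\lfloor x/\varepsilon\rfloor=M$, so the conclusion yielded by Theorem \ref{thm:main} will match the one stated in the corollary: either some $n\leq \tilde g^{(M)}(r)=\tilde g^{(\lfloor\log(1/\lambda)/\varepsilon\rfloor+1)}(r)$ with $\PP(B_{n,g(n)})<\varepsilon$, or $1-\lambda<\PP(B_{r,s})$.

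The substantive content is then to prove \eqref{ind:finitary} for this choice, and the approach is to replay the exponential estimate of Proposition \ref{prop:zeroone:ind} but entirely within the finite range $[r,s]$. Observe that by construction $a_0^{r,g}=r$ and $b_M^{r,g}=s$, and that all pairs $(a_i^{r,g},b_i^{r,g})$ for $i\leq M$ satisfy $r\leq a_i^{r,g}\leq b_i^{r,g}<a_{i+1}^{r,g}$, with $b_M^{r,g}=s$, so every index encountered lies in $[r,s]$. Therefore the local hypotheses of the corollary supply exactly the instances of \eqref{closure} and \eqref{pairwise} needed to carry out the induction in the proof of Proposition \ref{prop:zeroone:ind}, giving
\[
\PP(\bar{B}_{r,s})\;=\;\PP(\bar{B}_{a_0^{r,g},b_M^{r,g}})\;\leq\;\prod_{i=0}^{M}\PP(\bar{B}_{a_i^{r,g},b_i^{r,g}})\;\leq\;\exp\!\Bigl(-\sum_{i=0}^{M}\PP(B_{a_i^{r,g},b_i^{r,g}})\Bigr).
\]
If $\PP(B_{r,s})\leq 1-\lambda$, then $\PP(\bar{B}_{r,s})\geq\lambda$, so rearranging yields $\sum_{i=0}^{M}\PP(B_{a_i^{r,g},b_i^{r,g}})\leq \log(1/\lambda)<M\varepsilon=x$, which is precisely \eqref{ind:finitary}. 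An application of Theorem \ref{thm:main} then concludes the proof.

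I do not expect any serious obstacle: the only point requiring care is the bookkeeping that every pair $(a_i^{r,g},b_i^{r,g})$, as well as every pair $(a_i^{r,g},b_j^{r,g})$ appearing as an intermediate term in the induction, remains inside $[r,s]$, so that the local forms of \eqref{closure} and \eqref{pairwise} actually apply. This follows from the monotone construction of the sequences $(a_i^{r,g}),(b_i^{r,g})$ and the choice $s=b_M^{r,g}$. The small constant $+1$ in the exponent of $\tilde g$ (relative to the naive bound $\lfloor\log(1/\lambda)/\varepsilon\rfloor$) is exactly what is needed to push $M\varepsilon$ strictly above $\log(1/\lambda)$, and is what allows us to conclude a strict inequality $\sum<x$ and hence the clean dichotomy in the statement.
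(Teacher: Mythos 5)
Your proof is correct and follows essentially the same route as the paper: both reduce to Theorem \ref{thm:main} by verifying \eqref{ind:finitary} via the exponential estimate from Proposition \ref{prop:zeroone:ind} carried out over the finite range $[r,s]$, with $j=M=\lfloor\log(1/\lambda)/\varepsilon\rfloor+1$. The only difference is cosmetic — the paper takes $x:=\log(1/\lambda)+\varepsilon$ while you take $x:=M\varepsilon$, but both choices yield $\lfloor x/\varepsilon\rfloor=M$ and hence the same conclusion.
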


\begin{proof}
Fix $\varepsilon,\lambda\in (0,1)$, $r\in\NN$ and $g:\NN\to\NN$. For $j:=\lfloor \log(1/\lambda)/\varepsilon\rfloor+1$, if $\PP\left({B}_{r,b^{r,g}_j}\right)\leq 1-\lambda$ then reasoning exactly as in the proof of Proposition \ref{prop:zeroone:ind}, noting that \eqref{closure} and \eqref{pairwise} hold on the relevant range, we have
\[
\lambda\leq \PP\left(\bar{B}_{r,b^{r,g}_j}\right)\leq \PP\left(\bar{B}_{a^{r,g}_0,b^{r,g}_j}\right)\leq \exp\left(-\sum_{i=0}^j \PP\left(B_{a^{r,g}_i,b^{r,g}_i}\right)\right)
\]
and thus setting $x:=\log(1/\lambda)+\varepsilon$ we have
\[
\sum_{i=0}^{\lfloor x/\varepsilon\rfloor} \PP\left(B_{a^{r,g}_i,b^{r,g}_i}\right)=\sum_{i=0}^j \PP\left(B_{a^{r,g}_i,b^{r,g}_i}\right)\leq \log(1/\lambda)<x,
\]
and so we have shown that \eqref{ind:finitary} holds for this choice of $x$ and $s:=b^{r,g}_j$, from which the result follows.
\end{proof}

\begin{example}
\label{ex:bc:finitary}
Continuing Example \ref{ex:bc}, Corollary \ref{cor:independent} gives us a finitary analogue of the fact that
\[
\PP\left(\text{$A_j$ occurs infinitely often}\right)\in \{0,1\}
\]
whenever $(A_j)$ are mutually independent: Specifically, for any $\varepsilon,\lambda\in (0,1)$, $r\in\NN$ and $g:\NN\to\NN$ with $g(k)\geq k$ for all $k\in\NN$, whenever the events
\[
\{A_r,A_{r+1},\ldots,A_{s-1},A_s\}
\]
are mutually independent for $s:=b^g_{\lfloor \log(1/\lambda)/\varepsilon\rfloor+1}$ as defined in Corollary \ref{cor:independent}, either
\[
\exists n\leq \tilde g^{(\lfloor \log(1/\lambda)/\varepsilon\rfloor+1)}(r)\left(\PP\left(\bigcup_{j=n}^{g(n)} A_j\right)<\varepsilon\right) \ \ \ \mbox{or} \ \ \ \PP\left(\bigcup_{j=r}^s A_j\right)>1-\lambda.
\]
This represents a quantitative version of the zero-one law arising from the first and second Borel-Cantelli lemmas, which were individually analysed in \cite[Theorems 2.1 and 2.2]{arthan-oliva:21:borel-cantelli}. However, metastability is only a feature of our approach, arising from the instance of law of excluded middle required when combining the two Borel-Cantelli lemmas in this way. 
\end{example}

\begin{remark}[Deriving Theorem \ref{thm:zeroone} from Theorem \ref{thm:main}]
\label{rem:equiv}
Suppose that \eqref{closure} and \eqref{ind} are satisfied. It suffices to show that \eqref{mZO} holds, which by Lemmas \ref{lem:contents} and \ref{lem:mzo} is equivalent to $\PP(B_\infty)\in \{0,1\}$. To that end, fix $\varepsilon,\lambda\in (0,1)$, $r\in\NN$ and $g:\NN\to\NN$, and let $(a^{r,g}_n)$ and $(b^{r,g}_n)$ be defined as in Theorem \ref{thm:main}. By Lemma \ref{lem:contents}, \eqref{ind} is equivalent to \eqref{ind:quant}, which in turn implies that there exists some $x>0$ such that
\[
\forall k\geq r\left(\PP(B_{r,k})\leq 1-\lambda\right)\implies \forall t\left(\sum_{i=0}^t \PP(B_{a^{r,g}_i,b^{r,g}_i})<x\right).
\]
Setting $t:=\lfloor x/\varepsilon\rfloor$, it follows (again, classically) that there exists some $s\geq r$ such that \eqref{ind:finitary} holds. We then easily infer that 
\[
\exists k,n\left(\PP(B_{n,g(n)})<\varepsilon\vee \PP(B_{r,k})>1-\lambda\right)
\]
using Theorem \ref{thm:main}, and since $\varepsilon,\lambda,r$ and $g$ are arbitrary, this establishes \eqref{mZO}.
\end{remark}

\begin{remark}[Using the finitary zero-one law]
The disjunction in the conclusion of our finitary zero-one laws is not in general decidable, and so in that sense our main result is not fully constructive. However, zero-one laws are often used in the form of an implication $\PP(B_\infty)<1\implies \PP(B_\infty)=0$, and here our finitisation via the Shoenfield interpretation coincides with the Dialectica interpretation of the implication
\[
\exists \lambda\in (0,1)\, \exists r\, \forall k\left(\PP(B_{r,k})\leq 1-\lambda\right)\implies \forall \varepsilon\in (0,1)\, \forall g\,\exists n\left(\PP(B_{n,g(n)})<\varepsilon\right),
\]
and therefore in such cases one can use it to provide a quantitative route from $\PP(B_\infty)<1$ to $\PP(B_\infty)=0$. To be more precise, in situations where we take as premise that $\PP(B_\infty)<1$, and this comes equipped with concrete witnesses $\lambda>0$ and $r\in\NN$ satisfying
\[
\PP\left(\exists k\geq r\, B_{r,k}\right)\leq 1-\lambda
\label{eqn:right}\tag{R}
\]
we can then obtain a rate of metastability rate for the statement $\PP(B_\infty)=0$, in the sense of a concrete bound on a witness $n$ for
\[
\forall \varepsilon\in (0,1)\, \forall g\,\exists n\left(\PP(B_{n,g(n)})<\varepsilon\right).
\]
We will give two example where the finitary zero-one law is used in this way, relating to Kolmgorov's three-series theorem Theorem \ref{thm:threeseries}) and bond percolation (Corollary \ref{percolation2}), and we conjecture that there are many other cases where our finitary zero-one law can be used to generate bounds on metastable statements in the probability literature. 
\end{remark}

\begin{remark}[The necessity of metastability]
\label{rem:specker}
It is natural to ask whether the indirect computational interpretation provided by metastability is necessary. We show that even in the case that \eqref{eqn:right} is satisfied for concrete $\lambda,r$, a \emph{direct} computational interpretation of $\PP(B_\infty)=0$ in the sense of a computable function $\phi:(0,1)\to \NN$ satisfying 
\[
\forall \varepsilon>0\, \forall k\left(\PP(B_{\phi(\varepsilon),k})<\varepsilon\right)
\label{S}\tag{S}
\]
is not generally possible. Let $(X_n)$ be a sequence of independent random variables all uniformly distributed on $[0,1]$, and $(q_n)\subset (0,1)$ a monotonically increasing sequence of rationals that converge to a noncomputable limit $q$ (the existence of such sequences is a standard fact of computable analysis going back to Specker \cite{specker:49:sequence}). Now defining $(B_{n,k})$ as in Example \ref{ex:bc} for $A_n$ the event $X_n\in [q_n,q_{n+1}]$, we have that the $(A_n)$ are mutually independent by independence of $(X_n)$, and furthermore
\begin{align*}
\PP\left(\bigcup_{k=0}^\infty B_{0,k}\right)=\PP\left(\bigcup_{k=0}^\infty A_{k}\right)&\leq \lim_{m\to\infty}\sum_{k=0}^m \PP(A_k)=\lim_{m\to\infty}\sum_{k=0}^m (q_{k+1}-q_k)\\
&=\lim_{m\to\infty}(q_{m+1}-q_0)= q-q_0\leq 1-q_0
\end{align*}
and thus \eqref{eqn:right} holds for $r:=0$ and $\lambda:=q_0$. However, were \eqref{S} to hold for some computable $\phi$, then by similar reasoning to the proof of Proposition \ref{prop:zeroone:ind} it follows that
\[
\forall \varepsilon\in (0,1)\, \left(\sum_{i=\phi(\varepsilon)}^\infty \PP(A_i)\leq-\log(1-\varepsilon)\right),
\]
and so setting $\psi(\delta):=\phi(1-\exp(-\delta))$, for any $\delta>0$ and $n\geq \psi(\delta)$ we have
\[
q-q_n=\sum_{i=n}^\infty \PP(A_i)\leq \sum_{i=\psi(\delta)}^\infty \PP(A_i)\leq \delta, 
\]
and thus $\psi$ is a computable rate of convergence for $(q_n)$, a contradiction.
\end{remark}

\section{Examples}
\label{sec:examples}

We now provide three short case studies where we illustrate how the results given in the previous section can be utilised to provide new quantitative, finitary theorems in different areas of probability, demonstrating that our syntactic zero-one law and its finitary counterparts are broadly applicable. Our case studies focus, respectively, on: (a) extensions to Example \ref{ex:bc} -- building on recent work of Arthan and Oliva \cite{arthan-oliva:21:borel-cantelli}; (b) metastable almost sure convergence, introduced from a logical perspective and for general measures by Avigad et al. in \cite{avigad-gerhardy-towsner:10:local} and extensively studied in recent years \cite{neri:25:kronecker,neri-pischke:pp:formal,neri-pischke-powell:25:learnability,neri-powell:pp:rs,neri-powell:25:martingale}, where we use our zero-one law to provide a quantitative version of the Kolmogorov three-series theorem; and (c) percolation theory, considered from the perspective of applied proof theory for the very first time in this paper.

\subsection{The Erd\H{o}s-R\'enyi theorem}
\label{sec:erdosrenyi}

We first consider a zero-one law based on the so-called Erd\H{o}s-R\'enyi theorem \cite{erdos-renyi:59:cantor}, inspired by the quantitative study of the latter in \cite{arthan-oliva:21:borel-cantelli}. This represents both an expansion of their \cite[Theorem 3.3]{arthan-oliva:21:borel-cantelli}, and a strengthening of our Example \ref{ex:bc:finitary}, where mutual independence of the $(A_j)$ is weakened to the following condition (here and in the remainder of this section we use the more compact notation $A_iA_j:=A_i\cap A_j$):
\[
\liminf_{n\to\infty} \frac{\sum_{i,j=0}^n \PP(A_iA_j)}{\left(\sum_{k=0}^n \PP(A_k)\right)^2}=1.
\label{ER}
\]
As such, this example demonstrates the versatility of our abstract independence condition \eqref{ind} and its quantitative counterpart \eqref{ind:dial}. We begin by restating \cite[Theorem 3.3]{arthan-oliva:21:borel-cantelli}, where now we modify their assumption $\sum_{n=0}^\infty \PP(A_i)=\infty$ by making explicit precisely how it is used for fixed parameters.
\begin{lemma}[Due to \cite{arthan-oliva:21:borel-cantelli}\footnote{Here we also provide a simplification of the proof of \cite[Theorem 3.3]{arthan-oliva:21:borel-cantelli} communicated to the authors by Paulo Oliva, which reduces the complexity of the resulting bound.}]
\label{lem:er}
Let $(A_n)$ be an arbitrary sequence of events, and suppose that $\phi:(0,1)\times\NN\to\NN$ satisfies 
\[
\forall \lambda\in (0,1),n\in\NN\left(\phi(\lambda,n)\geq n\wedge \frac{\sum_{i,j=0}^{\phi(\lambda,n)}\PP(A_iA_j)}{\left(\sum_{k=0}^{\phi(\lambda,n)}\PP(A_k)\right)^2}< 1+\lambda\right).
\]
Then for any $n,N\in\NN$, if 
\[
\sum_{i=0}^N \PP(A_i)\geq 2n
\]
then for all $\lambda\in(0,1)$
\[
\PP\left(\bigcup_{i=n}^{\phi(\lambda/4,N)} A_i\right)> 1-\lambda.
\]
\end{lemma}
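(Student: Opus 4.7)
The plan is to apply the second moment method via Chebyshev's inequality to the counting variable $S := \sum_{i=0}^M \mathbf{1}_{A_i}$ where $M := \phi(\lambda/4, N)$. The assumption on $\phi$ gives direct control on $\EE[S^2]/(\EE S)^2$, while the hypothesis $\sum_{i=0}^N \PP(A_i) \geq 2n$, combined with the monotonicity $\phi(\lambda/4,N) \geq N$, ensures $\EE S = \sum_{k=0}^M \PP(A_k) \geq 2n$, so that $n$ is a safely sub-mean threshold.

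The first step is the elementary inclusion $\{S > n\} \subseteq \bigcup_{i=n}^M A_i$: the head sum $\sum_{i=0}^{n-1}\mathbf{1}_{A_i}$ consists of $n$ terms each bounded by $1$, so $S > n$ forces $\sum_{i=n}^M \mathbf{1}_{A_i} \geq 1$. The second step is to rewrite the hypothesis as $\EE[S^2] = \sum_{i,j=0}^M \PP(A_iA_j) < (1+\lambda/4)(\EE S)^2$, which yields $\var(S) < (\lambda/4)(\EE S)^2$. The third step is Chebyshev: since $\EE S - n \geq \EE S/2 > 0$,
$$\PP(S \leq n) \leq \frac{\var(S)}{(\EE S - n)^2} \leq \frac{4\var(S)}{(\EE S)^2} < \lambda,$$
and consequently $\PP\left(\bigcup_{i=n}^M A_i\right) \geq \PP(S > n) > 1 - \lambda$.

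The main conceptual obstacle is identifying the right event to bound. The naive Paley-Zygmund estimate on $\{S > 0\}$ delivers only a lower bound of the form $1/(1+\lambda/4)$, further degraded by restricting to $i \geq n$, and does not approach $1$ as $\lambda \to 0$. Applying Chebyshev against the shifted threshold $\{S > n\}$ is what converts a second-moment ratio close to $1$ into a probability close to $1$; the quantitative interplay — a factor $4$ between $n$ and $\EE S$ absorbing the factor $1/4$ from the variance bound — is precisely what dictates the appearance of $\phi(\lambda/4, N)$, rather than $\phi(\lambda, N)$, in the statement.
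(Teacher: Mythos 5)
Your proposal is correct and follows essentially the same route as the paper: both apply the second moment method via Chebyshev to the counting variable $S=\sum_{i=0}^{M}\mathbf{1}_{A_i}$ with $M=\phi(\lambda/4,N)$, use $\phi(\lambda/4,N)\geq N$ to get $\EE S\geq 2n$, and conclude via the pigeonhole inclusion $\{S>n\}\subseteq\bigcup_{i=n}^M A_i$. The only cosmetic difference is that the paper applies Chebyshev at the threshold $\EE S/2$ and then notes $\EE S/2\geq n$, whereas you apply Chebyshev directly at the threshold $n$ and absorb the same inequality into the denominator estimate $(\EE S-n)^2\geq(\EE S)^2/4$.
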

\begin{proof}
Let $B_{n,k}:=\bigcup_{j=n}^k A_j$ as in Example \ref{ex:bc}, $X_n:=1_{A_n}$ and $Y_n:=\sum_{i=0}^n X_i$. Noting that
\[
\frac{\sum_{i,j=0}^{n}\PP(A_iA_j)}{\left(\sum_{k=0}^{n}\PP(A_k)\right)^2}=\frac{\EE\left[Y_{n}^2\right]}{\EE\left[Y_{n}\right]^2}=\frac{\var(Y_n)}{\EE\left[Y_{n}\right]^2}+1
\]
it follows from the defining property of $\phi$ that
\[
\forall \lambda,n\left(\phi(\lambda/4,n)\geq n\wedge \frac{\var(Y_{\phi(\lambda/4,n)})}{\EE\left[Y_{\phi(\lambda/4,n)}\right]^2}< \frac{\lambda}{4}\right).
\]
Fixing $n\in\NN$ and $\lambda\in (0,1)$, let $N\in\NN$ be such that $\sum_{i=0}^N \PP(A_i)\geq 2n$. Define $Y:=Y_{\phi(\lambda/4,N)}$, $\sigma^2:=\var(Y_{\phi(\lambda/4,N)})$ and $\mu:=\EE[Y]=\EE[Y_{\phi(\lambda/4,N)}]$. In particular we have $\sigma^2/\mu^2< \lambda/4$ and therefore by Chebyshev's inequality 
\[
\PP\left(Y\leq \frac{\mu}{2}\right)\leq \PP\left(|Y-\mu|\geq \frac{\mu}{2}\right)\leq \frac{4\sigma^2}{\mu^2}< \lambda
\]
and thus
\[
\PP\left(Y>\frac{\mu}{2}\right)> 1-\lambda.
\]
It therefore suffices to show that
\[
\left\{Y_{\phi(\lambda/4,N)}>\frac{\EE[Y_{\phi(\lambda/4,N)}]}{2}\right\}=\left\{Y>\frac{\mu}{2}\right\}\subseteq \bigcup_{i=n}^{\phi(\lambda/4,N)} A_i.
\]
To this end, first observe that since $\phi(\lambda/4,N)\geq N$ then
\[
\frac{\EE[Y_{\phi(\lambda/4,N)}]}{2}\geq \frac{\EE[Y_{N}]}{2}\geq \frac{1}{2}\sum_{i=0}^N \PP(A_i)\geq n.
\]
Therefore for any $\omega\in \left\{Y>\frac{\mu}{2}\right\}$, we have
\[
Y_{\phi(\lambda/4,N)}(\omega)=\sum_{i=0}^{\phi(\lambda/4,N)}X_i(\omega)\geq n+1,
\]
and therefore by the pigeonhole principle we have $X_i(\omega)=1$ for at least one of $\{n,n+1,\ldots,\phi(\lambda/4,N)\}$, and thus $\omega\in \bigcup_{i=n}^{\phi(\lambda/4,N)} A_i$.
\end{proof}

\begin{theorem}
\label{res:erdos:quantit}
Let $(A_n)$ be an arbitrary sequence of events, and let $\phi:(0,1)\times\NN\to\NN$ be as in Lemma \ref{lem:er}. Then for any $\varepsilon,\lambda\in (0,1)$, $r\in\NN$ and $g:\NN\to\NN$ with $g(k)\geq k$ for all $k\in\NN$, either 
\[
\exists n\leq \tilde g^{(\lfloor 2r/\varepsilon\rfloor)}(r)\,\PP\left(\bigcup_{j=n}^{g(n)}A_j\right)<\varepsilon \ \ \mbox{or} \ \ 1-\lambda<\PP\left(\bigcup_{j=r}^{\phi\left(\lambda/4,b^{r,g}_{\lfloor 2r/\varepsilon\rfloor}\right)}A_j\right).
\]
for $(b^{r,g}_n)$ defined as in Theorem \ref{thm:main}.
\end{theorem}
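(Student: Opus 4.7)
The plan is to derive Theorem \ref{res:erdos:quantit} as an instance of Corollary \ref{cor:implication} applied to the family $B_{n,k}:=\bigcup_{j=n}^k A_j$ from Example \ref{ex:bc}, with the Erd\H{o}s-R\'enyi hypothesis (captured by the existence of $\phi$) playing the role of the Dialectica witness \eqref{ind:dial} for the abstract independence property. Recall that $(B_{n,k})$ already satisfies \eqref{closure}, so all that is required is to produce the functions $\rho$ and $\sigma$ demanded by Corollary \ref{cor:implication}.

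The first step is to define
\[
\rho(n,\lambda):=2n,\qquad \sigma(N,n,\lambda):=\phi(\lambda/4,\max(N,n)),
\]
noting that $\sigma(N,n,\lambda)\geq \max(N,n)\geq n$ as required. I then verify \eqref{ind:dial}. Fix $n,N\in\NN$, $\lambda\in (0,1)$ and a sequence $n\leq a_0\leq b_0<a_1\leq b_1<\ldots$, and assume $\PP(B_{n,\sigma(N,n,\lambda)})\leq 1-\lambda$. For the implication to be non-trivial we only need to treat those $t$ with $b_t\leq N$; in that case $N\geq a_0\geq n$, so $\sigma(N,n,\lambda)=\phi(\lambda/4,N)$. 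The contrapositive of Lemma \ref{lem:er} applied to the current $n,N,\lambda$ then gives $\sum_{i=0}^N \PP(A_i)<2n$. Since the intervals $[a_i,b_i]$ are pairwise disjoint in their indices,
\[
\sum_{i=0}^t \PP(B_{a_i,b_i})\leq \sum_{i=0}^t \sum_{j=a_i}^{b_i}\PP(A_j)\leq \sum_{j=0}^N\PP(A_j)<2n=\rho(n,\lambda),
\]
which is exactly \eqref{ind:dial}.

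The remaining step is bookkeeping: apply Corollary \ref{cor:implication} with these $\rho,\sigma$ and the given $\varepsilon,\lambda,r,g$. The counter bound becomes $\lfloor \rho(r,\lambda)/\varepsilon\rfloor=\lfloor 2r/\varepsilon\rfloor$, and in the right disjunct the index simplifies to $\sigma(b^{r,g}_{\lfloor 2r/\varepsilon\rfloor},r,\lambda)=\phi(\lambda/4,b^{r,g}_{\lfloor 2r/\varepsilon\rfloor})$, since $b^{r,g}_i=g(a^{r,g}_i)\geq r$ so the outer $\max$ is redundant. Substituting $B_{n,g(n)}=\bigcup_{j=n}^{g(n)}A_j$ and $B_{r,\phi(\lambda/4,b^{r,g}_{\lfloor 2r/\varepsilon\rfloor})}=\bigcup_{j=r}^{\phi(\lambda/4,b^{r,g}_{\lfloor 2r/\varepsilon\rfloor})}A_j$ produces exactly the two alternatives stated in the theorem.

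The only real mathematical content is the verification of \eqref{ind:dial}, and this is a direct pairing of Lemma \ref{lem:er} (used in its contrapositive form to convert $\PP(B_{n,\phi(\lambda/4,N)})\leq 1-\lambda$ into an upper bound $2n$ on $\sum_{j=0}^N\PP(A_j)$) with the elementary observation that $\sum_i \PP(B_{a_i,b_i})$ is dominated by that partial sum. I do not expect any serious obstacle: once the correspondence $\rho\leftrightarrow 2n$, $\sigma\leftrightarrow \phi(\lambda/4,\cdot)$ is spotted, the theorem is essentially a substitution into Corollary \ref{cor:implication}.
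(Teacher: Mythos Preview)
Your proposal is correct and follows essentially the same route as the paper: verify \eqref{ind:dial} for $B_{n,k}=\bigcup_{j=n}^k A_j$ with $\rho(n,\lambda)=2n$ and $\sigma$ built from $\phi(\lambda/4,\cdot)$, via the contrapositive of Lemma~\ref{lem:er} and the elementary bound $\sum_i\PP(B_{a_i,b_i})\leq\sum_{j=0}^N\PP(A_j)$, then invoke Corollary~\ref{cor:implication}. The only difference is cosmetic: the paper sets $\sigma(N,n,\lambda):=\phi(\lambda/4,N)$ directly, whereas you insert a $\max(N,n)$ to guarantee the side condition $\sigma(N,n,\lambda)\geq n$ literally for all $N$; since the only instance actually used has $N=b^{r,g}_{\lfloor 2r/\varepsilon\rfloor}\geq r$, this makes no practical difference.
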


\begin{proof}
Let $B_{n,k}:=\bigcup_{j=n}^k A_j$ as in Example \ref{ex:bc}. It follows by Lemma \ref{lem:er} that
\begin{equation}
\forall n,N\in\NN\, \forall \lambda\in (0,1)\left(\PP\left(B_{n,\phi(\lambda/4,N)}\right)\leq 1-\lambda\implies\sum_{i=0}^N \PP(A_i)< 2n\right).
\label{paulo}
\end{equation}
We now claim that the independence condition \eqref{ind:dial} used in Corollary \ref{cor:implication} holds for $\rho(n,\lambda):=2n$ and $\sigma(N,n,\lambda):=\phi(\lambda/4,N)$. To see this, fix $n,N\in \NN$, $\lambda\in (0,1)$ and $n\leq a_0\leq b_0<a_1\leq b_1<\ldots$. Then from $\PP\left(B_{n,\sigma(N,n,\lambda)}\right)\leq 1-\lambda$, it follows from \eqref{paulo} that for any $t\in\NN$, whenever $b_t\leq N$ we have
\[
\sum_{i=0}^t \PP(B_{a_i,b_i})\leq \sum_{i=0}^t\sum_{j=a_i}^{b_i} \PP(A_j)\leq \sum_{j=a_0}^{b_t}\PP(A_j)\leq \sum_{j=0}^{N} \PP(A_j)<2n.
\]
That establishes the claim, and the result then follows from Corollary \ref{cor:implication}.
\end{proof}

\subsection{A criterion for almost sure convergence}
\label{sec:convergence}

In the last years, applied proof theory has seen an increased focus on metastable convergence theorems for \emph{stochastic processes}, encompassing in particular martingale convergence \cite{neri-powell:pp:rs,neri-powell:25:martingale} and laws of large numbers \cite{neri:25:kronecker}. Here, ordinary metastable convergence in the sense of \eqref{metastable} needs to be generalised to the stochastic setting, and a particularly useful notion is that of \emph{uniform metastable convergence}, first studied in a logical context in \cite{avigad-dean-rute:12:dominated,avigad-gerhardy-towsner:10:local}:
\begin{equation}
\forall \varepsilon>0\,\forall p\in\NN\, \forall g:\NN\to\NN\, \exists n\left(\PP\left(\exists i,j\in [n;g(n)](|X_i-X_j|\geq 2^{-p}\right)<\varepsilon\right),
\label{uniformmetastable}
\end{equation}
where here we use the notation $[n;k]:=\{n,n+1,\ldots,k-1,k\}$. We now show how our finitary zero-one law can be used to provide a quantitative almost-sure convergence criterion for series of random variables, capturing the qualitative idea that if $(X_n)$ are independent, then $\sum_{n=0}^\infty X_n$ converges with probability zero or one. In particular, if we can show that $\sum_{n=0}^\infty X_n$ converges with probability greater than zero, we can upgrade this to almost sure convergence, and our framework provides a way to do this quantitatively:
\begin{theorem}
\label{thm:conv}
Let $(X_n)$ be a sequence of independent random variables and define $S_n:=\sum_{i=0}^n X_i$. Suppose furthermore that $\phi:\NN\to \NN$ and $\lambda\in(0,1)$ are such that for all $p\in\NN$:
\begin{equation}
\PP\left(\exists i,j\geq \phi(p)(|S_i-S_j|\geq 2^{-p})\right)\leq 1-\lambda.
\label{notone}
\end{equation}
Then $(S_n)$ converges almost surely, where for any $\varepsilon>0$, $p\in\NN$ and $g:\NN\to\NN$ with $g(k)\geq k$ for all $k\in\NN$, there exists some $n\leq \tilde g^{(\lfloor\log(1/\lambda)/\varepsilon\rfloor+1)}(\phi(p))$ such that
\[
\PP\left(\exists i,j\in [n;g(n)](|S_i-S_j|\geq 2^{-p}\right)<\varepsilon.
\]
\end{theorem}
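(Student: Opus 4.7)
The plan is to reduce the statement to a direct instance of Corollary \ref{cor:independent}. Fixing $p \in \NN$, define the parametrised family
\[
B_{n,k} := \{\omega : \exists i, j \in [n; k]\, (|S_i(\omega) - S_j(\omega)| \geq 2^{-p})\}.
\]
The closure condition \eqref{closure} is immediate, since $[n; m], [l; k] \subseteq [n; k]$ whenever $n \leq m < l \leq k$. For the independence condition \eqref{pairwise}, observe that for $i \leq j$ in $[n; m]$, the partial sum $S_j - S_i$ equals $\sum_{t=i+1}^j X_t$, so $B_{n,m}$ is measurable with respect to $\sigma(X_{n+1}, \ldots, X_m)$, and analogously $B_{l,k}$ is measurable with respect to $\sigma(X_{l+1}, \ldots, X_k)$. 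When $m < l$, these sigma-algebras are generated by disjoint blocks of the independent sequence $(X_n)$, and so $B_{n,m}$ and $B_{l,k}$ are independent.

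Next I would invoke Corollary \ref{cor:independent} with $r := \phi(p)$ and the given $\varepsilon, \lambda, g$. This yields a dichotomy: either there exists some $n \leq \tilde g^{(\lfloor \log(1/\lambda)/\varepsilon\rfloor+1)}(\phi(p))$ with $\PP(B_{n, g(n)}) < \varepsilon$, or $\PP(B_{\phi(p), s}) > 1 - \lambda$ for $s := b^{\phi(p), g}_{\lfloor \log(1/\lambda)/\varepsilon\rfloor + 1}$. The second case is ruled out by the hypothesis \eqref{notone}, because
\[
B_{\phi(p), s} \subseteq \{\exists i, j \geq \phi(p)\, (|S_i - S_j| \geq 2^{-p})\}
\]
and the right-hand side has probability at most $1 - \lambda$ by assumption. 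Hence the first case holds, which is precisely the claimed quantitative estimate, after noting that $\PP(B_{n, g(n)}) = \PP(\exists i, j \in [n; g(n)](|S_i - S_j| \geq 2^{-p}))$.

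Finally, qualitative almost sure convergence is obtained by applying Theorem \ref{thm:zeroone} to the same family: we conclude that $\PP(B_\infty) \in \{0, 1\}$, and since \eqref{notone} forces $\PP(B_\infty) \leq 1 - \lambda < 1$, in fact $\PP(B_\infty) = 0$. As $B_\infty$ is precisely the event that $(S_n)$ fails to be Cauchy at precision $2^{-p}$, taking a countable union over $p \in \NN$ shows that $(S_n)$ is almost surely Cauchy, and hence convergent. The only step requiring genuine care is the identification of the appropriate family $(B_{n,k})$ and the careful bookkeeping of the sigma-algebras generated by the disjoint blocks of increments; once this is in place, the result follows with no further work from the machinery of Section \ref{sec:finitisation}.
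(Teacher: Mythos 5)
Your proposal is correct and follows essentially the same approach as the paper: fix $p$, define the same family $B^p_{n,k}$, verify \eqref{closure} and \eqref{pairwise} by noting that $B^p_{n,k}$ depends only on $X_{n+1},\ldots,X_k$, and apply Corollary~\ref{cor:independent} with $r:=\phi(p)$, ruling out the second disjunct via \eqref{notone}. The paper's proof is more terse (it does not spell out the $\sigma$-algebra bookkeeping or the qualitative a.s.\ convergence step), but the underlying argument is identical.
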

\begin{proof}
Fixing $p\in\NN$, we apply Corollary \ref{cor:independent} with
\[
B^p_{n,k}:=\exists i,j\in [n;k](|S_i-S_j|\geq 2^{-p}).
\]
It is clear that the requisite properties of $(B_{n,k})$ are satisfied in this case, and on any range, where for independence we note that $B^p_{n,k}$ depends only on the random variables $\{X_{n+1},\ldots,X_k\}$. Fixing also $\varepsilon\in (0,1)$ and $g:\NN\to\NN$, the result then follows by applying Corollary \ref{cor:independent} with $r:=\phi(p)$ and $\lambda$, for $\phi$ and $\lambda$ satisfying the premise of the theorem, which ensures that
\[
\PP(B_{\phi(p),s})\leq 1-\lambda
\]
for any $s\in\NN$.
\end{proof}
\begin{example}
\label{ex:criterion}
For a simple example where \eqref{notone} holds trivially but the best we can hope for are metastable convergence rates for $(S_n)$, let $(A_n)$ be defined just as in Remark \ref{rem:specker} i.e.\ $A_n$ denotes the event $U_n\in [q_n,q_{n+1}]$ for $(U_n)$ a sequence of independent random variables uniformly distributed on $[0,1]$ and $(q_n)\subset (0,1)$ a monotone increasing sequence converging to a noncomputable real $q$. Define $X_n:=I_{A_n}$. Then for any $p\in\NN$ we have
\begin{align*}
\PP\left(\exists i,j(|S_i-S_j|\geq 2^{-p})\right)&=\PP\left(\exists k(X_k=1)\right)=\PP\left(\exists k\, A_k\right)\leq \sum_{k=0}^\infty \PP(A_k)=q-q_0
\end{align*}
and thus \eqref{notone} holds for $\phi(p):=0$ and $\lambda:=q_0$. On the other hand, were we to have some computable $\varphi:\NN\times (0,1)\to \NN$ such that for all $p\in\NN$ and $\varepsilon\in(0,1)$:
\[
\PP\left(\exists i,j\geq\varphi(p,\varepsilon)(|S_i-S_j|\geq 2^{-p})\right)<\varepsilon,
\]
then in particular we would have for all $k\in\NN$ and $\varepsilon\in (0,\varepsilon)$
\begin{align*}
\PP\left(\exists i\in [\varphi(0,\varepsilon)+1;k]\, A_i\right)=\PP\left(\exists i,j\in [\varphi(0,\varepsilon);k](|S_i-S_j|\geq 1)\right)<\varepsilon
\end{align*}
and by exactly the same reasoning as Remark \ref{rem:specker}, it follows that
\[
\forall \varepsilon\in (0,1)\left(\sum_{i=\varphi(0,\varepsilon)+1}^\infty \PP(A_i)\leq -\log(1-\varepsilon)\right),
\]
which can then be used to construct a computable rate of convergence for $(q_n)$.
\end{example}
More generally, the condition \eqref{notone} naturally appears in the various well-known results which characterise conditions under which sums of independent random variables converge. In fact, we can extend Example \ref{ex:criterion} to give a computational version of the Kolmogorov's three-series theorem, which would be applicable in situations where we can show that $\sum_{n=0}^\infty \PP(|X_n|\geq a)<\infty$ for some $a>0$, but cannot provide a computable convergence rate for this series.
\begin{theorem}
\label{thm:threeseries}
Let $(X_n)$ be a sequence of independent random variables and define $S_n:=\sum_{i=0}^n X_i$. Suppose that the following conditions are satisfied for some $a>0$, $n_0,q\in\NN$ and $\varphi,\xi:\NN\to \NN$, where $Y_n:=X_nI_{|X_n|\leq a}$:
\begin{enumerate}[(i)]

	\item $\sum_{i=n_0}^\infty \PP(|X_i|\geq a)\leq 1-2^{-q}$;
	
	\item $\left\vert\sum_{i=n}^m \EE[Y_i]\right\vert<2^{-p}$ for all $p\in\NN$ and $m,n\geq \varphi(p)$;
	
	\item $\sum_{n=\xi(p)}^\infty \var(Y_n)<2^{-p}$ for all $p\in\NN$.

\end{enumerate}
Then $(S_n)$ converges almost surely, and for any $\varepsilon>0$, $p\in\NN$ and $g:\NN\to\NN$ with $g(k)\geq k$ for all $k\in\NN$, there exists some $n\leq \tilde g^{(\lfloor (q+1)\log(2)/\varepsilon\rfloor+1)}(\phi(p))$ such that
\[
\PP\left(\exists i,j\in [n;g(n)](|S_i-S_j|\geq 2^{-p}\right)<\varepsilon,
\]
where $\phi(p):=\max\{n_0,\varphi(p+1),\xi(2p+q+5)\}$.
\end{theorem}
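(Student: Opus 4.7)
The plan is to apply Theorem \ref{thm:conv} with $\phi$ as defined in the statement and with $\lambda := 2^{-q-1}$; since $\log(1/\lambda) = (q+1)\log 2$, this immediately delivers the claimed bound $\tilde g^{(\lfloor (q+1)\log(2)/\varepsilon\rfloor+1)}(\phi(p))$. The entire task therefore reduces to verifying the premise \eqref{notone}, namely
\[
\PP\left(\exists i,j\geq \phi(p)(|S_i-S_j|\geq 2^{-p})\right)\leq 1-2^{-q-1}
\]
for every $p \in \NN$. This is a quantitative repackaging of the classical proof of Kolmogorov's three-series theorem, compressed into a single tail bound rather than split into its usual three-stage convergence argument.

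First I decompose $X_k = Y_k + Z_k$, where $Z_k := X_k I_{|X_k|>a} = X_k - Y_k$, and introduce the centred partial sums $\tilde T_n := \sum_{k=0}^n (Y_k - \EE[Y_k])$. For $j>i\geq \phi(p)$, the identity
\[
S_j - S_i = (\tilde T_j - \tilde T_i) + \sum_{k=i+1}^j \EE[Y_k] + \sum_{k=i+1}^j Z_k,
\]
combined with $\phi(p)\geq \varphi(p+1)$ and condition (ii) (which forces the middle term to be strictly less than $2^{-(p+1)}$ in absolute value), yields the inclusion
\[
\left\{\exists i,j\geq \phi(p)(|S_i-S_j|\geq 2^{-p})\right\}\subseteq \left\{\exists k\geq \phi(p)\, Z_k\neq 0\right\}\cup \left\{\sup_{n\geq \phi(p)}|\tilde T_n - \tilde T_{\phi(p)}|\geq 2^{-(p+2)}\right\},
\]
since on the complement of the first event we have $|\tilde T_j - \tilde T_i|\geq 2^{-(p+1)}$, and triangulating through $\tilde T_{\phi(p)}$ then forces the supremum on the right to exceed $2^{-(p+2)}$.

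The two events on the right are bounded separately. The first, by the union bound, $\phi(p)\geq n_0$ and condition (i), has probability at most $\sum_{k\geq n_0}\PP(|X_k|\geq a)\leq 1-2^{-q}$. The second is handled by Kolmogorov's maximal inequality applied to the independent, mean-zero variables $(Y_k-\EE[Y_k])_{k>\phi(p)}$: using $\phi(p)\geq \xi(2p+q+5)$ and condition (iii),
\[
\PP\left(\sup_{n\geq \phi(p)}|\tilde T_n-\tilde T_{\phi(p)}|\geq 2^{-(p+2)}\right)\leq 2^{2(p+2)}\sum_{k>\phi(p)}\var(Y_k)<2^{2p+4}\cdot 2^{-(2p+q+5)}=2^{-q-1}.
\]
Adding the two estimates gives exactly $1-2^{-q}+2^{-q-1}=1-2^{-q-1}$, as required. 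The one substantive analytic input is Kolmogorov's maximal inequality, which converts the tail control on $\sum \var(Y_k)$ provided by (iii) into a uniform bound on the centred partial sums; the remaining work is bookkeeping to align the constants $p+1$, $p+2$ and $2p+q+5$ with the indices in $\phi$.
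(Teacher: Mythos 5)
Your proof is correct and is essentially the same argument as the paper's: both reduce to verifying condition \eqref{notone} for $\lambda=2^{-(q+1)}$ and then invoke Theorem \ref{thm:conv}, and both allocate the three hypotheses in the same way — (i) controls a union-bound tail event on $\{|X_k|>a\}$, (ii) kills the deterministic drift term up to $2^{-(p+1)}$, and (iii) feeds Kolmogorov's maximal inequality after triangulating through the starting index to get the threshold $2^{-(p+2)}$, yielding the identical split $1-2^{-q}+2^{-(q+1)}$. The only difference is presentational: you decompose $S_j-S_i$ directly into the centred sum, the drift, and the truncation error, whereas the paper first conditions on the event $Q^N$ and then splits $T_n$ via the abstract inclusion $P^{p,N}_{(U_n+V_n)}\subseteq P^{p+1,N}_{(U_n)}\cup P^{p+1,N}_{(V_n)}$, but these are the same computation.
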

\begin{proof}
We show that \eqref{notone} holds for any $p\in\NN$ and for $\lambda:=2^{-(q+1)}$ and $\phi$ as defined in the statement of the result, from which the conclusion follows immediately. For $p,N\in\NN$ and $(U_n)$ some sequence of random variables, let us introduce the notation
\[
P^{p,N}_{(U_n)}:=\exists i,j\geq N(|U_i-U_j|\geq 2^{-p}).
\]
By a straightforward pointwise argument using the triangle inequality we can show that for any $p,N\in\NN$ and sequences $(U_n),(V_n)$:
\begin{equation}
P^{p,N}_{(U_n+V_n)}\subseteq P^{p+1,N}_{(U_n)}\cup P^{p+1,N}_{(V_n)}.
\label{sum}
\end{equation}
Let us also define $Q^N:=\forall i\geq N(|X_i|\leq a)$, and note that by (i) we have $\PP(\overline{Q^N})\leq 1-2^{-q}$ whenever $N\geq n_0$. Finally, let $T_n:=\sum_{i=0}^n Y_i$. Now, observing that on $Q^N$ we have $\forall i\geq N(X_i=Y_i)$ and therefore in particular $\forall i,j\geq N((S_i-S_j)=(T_i-T_j))$, it follows that
\begin{align*}
\PP\left(P^{p,N}_{(S_n)}\right)&= \PP\left(P^{p,N}_{(S_n)}\cap Q^N\right)+\PP\left(P^{p,N}_{(S_n)}\cap\overline{Q^N}\right) \\
&\leq \PP\left(P^{p,N}_{(T_n)}\right)+\PP\left(\overline{Q^N}\right)\leq \PP\left(P^{p,N}_{(T_n)}\right)+1-2^{-q}
\end{align*}
whenever $N\geq n_0$. Next, using \eqref{sum} we note that for any $p,N\in\NN$
\[
\PP\left(P^{p,N}_{(T_n)}\right)\leq \PP\left(P^{p+1,N}_{(T_n-\EE[T_n])}\right)+\PP\left(P^{p+1,N}_{(\EE[T_n])}\right).
\]
But for any $i,j\geq N\geq \varphi(p+1)$, assuming w.l.o.g.\ $j\geq i$, we have
\[
|\EE[T_i]-\EE[T_j]|\leq \left\vert \sum_{k=i+1}^j \EE[Y_i]\right\vert<2^{-(p+1)}
\]
and thus $P^{p+1,N}_{(\EE[T_n])}=\emptyset$, for any $p\in\NN$. Now, again by a pointwise argument via the triangle inequality we have
\begin{align*}
P^{p+1,N}_{(T_n-\EE[T_n])}&\subseteq \exists i\geq N\left(|(T_i-\EE[T_i])-(T_N-\EE[T_N])|\geq 2^{-(p+2)}\right)\\
&=\exists i\geq N\left(\left\vert\sum_{k=N+1}^i(Y_k-\EE[Y_k])\right\vert\geq 2^{-(p+2)}\right).
\end{align*}
By the Kolmogorov maximal inequality, setting $Z_i:=\sum_{k=N+1}^i(Y_k-\EE[Y_k])$ and noting that the $(Y_k)$ are independent, for any $l\in\NN$ we have
\begin{align*}
\PP\left(\exists i\in [N;N+l]\left(\left\vert Z_i\right\vert\geq 2^{-(p+2)}\right)\right)&=\PP\left(\max_{N\leq i\leq N+l}\left\vert Z_i\right\vert\geq 2^{-(p+2)}\right)\\
&\leq 2^{2(p+2)}\sum_{k=N+1}^{N+l}\var(Y_k)\\
&\leq 2^{2(p+2)}\sum_{k=N+1}^{\infty}\var(Y_k)
\end{align*}
and since $l$ was arbitrary
\[
\PP\left(\exists i\geq N\left(\left\vert Z_i\right\vert\geq 2^{-(p+2)}\right)\right)\leq 2^{2(p+2)}\sum_{k=N+1}^{\infty}\var(Y_k).
\]
Therefore whenever $N\geq\xi(2p+q+5)$ it follows that
\[
\PP\left(P^{p+1,N}_{(T_n-\EE[T_n])}\right)\leq 2^{2(p+2)}\sum_{k=\xi(2p+q+5)}^\infty\var[Y_k]<2^{2(p+2)}2^{-(2p+q+5)}=2^{-(q+1)}.
\]
Putting everything together, we have shown that for any $p\in\NN$, if $N\geq \max\{n_0,\varphi(p+1),\xi(2p+q+5)\}$ then
\begin{align*}
\PP\left(P^{p,N}_{(S_n)}\right)&\leq \PP\left(P^{p,N}_{(T_n)}\right)+1-2^{-q}\\
&\leq \PP\left(P^{p+1,N}_{(T_n-\EE[T_n])}\right)+1-2^{-q}\\
&\leq 2^{-(q+1)}+1-2^{-q}=1-2^{-(q+1)}
\end{align*}
from which the claim and hence the result follows.
\end{proof}

\subsection{Bond percolation}
\label{sec:percolation}

Our final example touches on percolation theory, an area in which zero-one laws traditionally play an important role. Here our purpose is simply to show how our zero-one law applies in this setting, and we leave more advanced case studies to future work. For formal details of everything that follows, the reader is directed to e.g.\ \cite{grimmett:99:book} or \cite[Section 2.4]{Kle2020}.

Let $\ZZ^d$ denote the $d$-dimensional integer lattice, and $E^d$ the set of edges $(x,y)$ between neighbours $x,y$ i.e. points $x,y\in \ZZ^d$ with $\norm{x-y}_2=1$. For any parameter $p\in [0,1]$, we consider each edge to be \emph{open} with probability $p$, and \emph{closed} otherwise, independently of all other edges. Writing $E^d_p:=\{e\in E^d\mid \text{$e$ is open}\}$, the structure $(\ZZ^d,E^p)$ is then a random subgraph of $(\ZZ^d,E^d)$ known as a \emph{percolation model}. The intended physical interpretation of $(\ZZ^d,E^d_p)$ is a porous medium where the open edges represent channels along which water can flow. 

To reason formally about $(\ZZ^d,E^d_p)$, ones can work in the probability space $(\Omega,\mathcal{F},\PP_p)$ where $\Omega:=\prod_{e\in E^d}\{0,1\}$, $\mathcal{F}$ is the $\sigma$-algebra generated by finite dimensional cylinders, and $\PP_p$ is the product measure
\[
\PP_p:=\prod_{e\in E^d}\mu^p_e
\]
for $\mu^p_e$ a Bernoulli measure on $\{0,1\}$ with parameter $p$. Concretely, for $\omega\in \Omega$, $\omega(e)=1$ then corresponds to the edge $e$ being open, and
\[
\mu^p_e(\omega(e)=1)=1-\mu^p_e(\omega(e)=0)=p
\]
for all $e\in E^d$. The main questions around percolation models focus the existence of an infinite connected cluster of open edges, and here an application of the Kolmogorov zero-one law tells us that for any parameter, an infinite connected cluster exists with probability zero or one. Since the probability of an infinite connected cluster is monotone in $p$, the more specific question arises, for which \emph{critical} value of $p$ do we experience a phase transition where this probability jumps from zero to one? In the case of the square lattice $\ZZ^2$, a proof that this critical probability is $1/2$ was given in a celebrated paper of Kesten \cite{kesten:80:half}, and more generally, the study of critical values relating to zero-one laws is central to percolation theory.

To see how bond percolation fits within our framework, let us define the $n$-square $S_n\subset \ZZ^d$ by
\[
S_n:=\{(x^1,\ldots,x^d)\in \ZZ^d\mid \text{$|x^j|=n$ for some $j$ and $|x^i|\leq n$ for all $i$}\}.
\]
We say that $x,y\in \ZZ^d$ are path connected, or there is a path between $x$ and $y$, if there exists a sequence $x_0,\ldots,x_k$ of neighbouring points such that $x_0=x$, $x_k=y$ and $(x_i,x_{i+1})$ is open for all $i=0,\ldots,k-1$. Similarly, we say that $S_n$ and $S_k$ are connected if there is a path between $x$ and $y$ for some $x\in S_n$ and some $y\in S_k$. For any parameter $p$, we now set
\[
B^p_{n,k}:=\text{$S_n$ and $S_k$ are not connected}.
\]
\begin{lemma}
\label{lem:perc}
For any $p\in [0,1]$ we have
\[
\overline{B^p_\infty}=\text{there exists an infinite connected cluster},
\]
where $B^p_\infty$ is defined as in Theorem \ref{thm:zeroone}.
\end{lemma}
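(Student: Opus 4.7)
The plan is to unfold $\overline{B^p_\infty}$ and verify both set inclusions pointwise. By De Morgan,
\[
\overline{B^p_\infty} = \bigcup_{n=0}^\infty \bigcap_{k=n}^\infty \{S_n \text{ and } S_k \text{ are connected}\},
\]
so the task reduces to showing that, for every outcome $\omega$, the existence of some $n$ with $S_n$ connected to $S_k$ for all $k \geq n$ is equivalent to the existence of an infinite open connected cluster in $(\ZZ^d, E^d_p)(\omega)$.

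For the implication from an infinite cluster to $\overline{B^p_\infty}$, I would fix an infinite open cluster $C$, set $m_0 := \min\{\|y\|_\infty : y \in C\}$, and pick some $x \in C \cap S_{m_0}$. The claim is that $C$ meets $S_k$ for every $k \geq m_0$, so that $n := m_0$ witnesses $\overline{B^p_\infty}$. Since $C$ is infinite, there exists $y \in C$ with $\|y\|_\infty \geq k$; since $C$ is connected, there is an open path from $x$ to $y$ lying entirely in $C$. This path is a nearest-neighbour walk whose $\ell^\infty$-norm changes by at most one at each step, so by a discrete intermediate-value argument it must pass through some vertex of $S_k$, which then lies in $C$.

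For the converse, suppose $\omega$ satisfies $\overline{B^p_\infty}$ with witness $n$, so that $S_n$ is connected to $S_k$ for every $k \geq n$. For each such $k$, pick an open path joining some vertex $x_k \in S_n$ to a vertex of $S_k$. As $S_n$ is a finite set, the pigeonhole principle yields a single $x \in S_n$ occurring as $x_k$ for infinitely many $k$. The open cluster $C(x)$ therefore contains vertices of arbitrarily large $\ell^\infty$-norm and is infinite.

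The only step with any real geometric content is the discrete intermediate-value observation in the forward direction; everything else reduces to pigeonhole and unfolding definitions, and the degenerate parameters $p \in \{0,1\}$ cause no issue.
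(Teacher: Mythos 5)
Your proof is correct and follows essentially the same route as the paper's: in the forward direction, pick a base sphere met by the infinite cluster $C$ and use the fact that $C$ is unbounded and connected to conclude it crosses every larger sphere; in the converse, use pigeonhole on the finite set $S_n$ to find a single $x \in S_n$ whose cluster is unbounded. Your version is marginally more explicit in the forward direction, spelling out the discrete intermediate-value step that the paper leaves implicit, but the underlying argument is the same.
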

\begin{proof}
In one direction, suppose there exists an infinite connected cluster $C\subseteq \ZZ^d$. Let $n\in\NN$ be such that $S_n\cap C\neq \emptyset$ and pick $x\in S_n\cap C$. Then for any $k\geq n$, since $x\in C$ there is at least one $y\in S_k$ such that $y\in C$, and therefore there is a path from $x$ to $y$. Therefore $\forall k\geq n\, \overline{B^p_{n,k}}$ and therefore $\overline{B^p_\infty}$. Conversely, suppose that $\overline{B^p_\infty}$ holds, and thus for some $n\in\NN$ we have $\forall k\geq n\, \overline{B^p_{n,k}}$. In other words, for all $k\geq n$ there exists $x_k\in S_n$ and $y_k\in S_k$ such that $x_k$ and $y_k$ are connected. Since $S_n$ is finite there exists an $x\in S_n$ along with a subsequence $n\leq k_0<k_1<\ldots$ such that $y_{k_l}\in S_{k_l}$ and $x$ and $y_{k_l}$ are connected for all $l\in\NN$. Then the set of points $C:=\{x\}\cup\{y_{k_l}\mid l\in\NN\}$ are all connected, and since $C$ is infinite, form part of an infinite connected cluster.
\end{proof}
Let us now define the functions $\theta$ and $\psi$, widely used in bond percolation, by
\begin{align*}
\theta(p)&:=\PP_p\left(\text{the origin is part of an infinite connected cluster}\right),\\
\psi(p)&:=\PP_p\left(\text{there exists an infinite connected cluster}\right).
\end{align*}
It follows immediately from Theorem \ref{thm:zeroone} that for any $p\in\NN$, $\psi(p)\in \{0,1\}$, and moreover if $\theta(p)>0$ then $\psi(p)=1$. We can now provide quantitative analogues of both of these statements. We start with a full finitization of the former:
\begin{theorem}
\label{percolation1}
Fix $\varepsilon,\lambda\in (0,1)$, $r\in\NN$ and $g:\NN\to\NN$. Suppose that each edge between neighbouring points $x,y\in \bigcup_{n=r}^s S_n$ for $s:=b^{r,g}_{\lfloor \log(1/\lambda)/\varepsilon\rfloor+1}$ (for $(b^{r,g}_n)$ defined as in Theorem \ref{thm:main}) is open with probability $p$, independent of the other edges. Then either there exists $n\leq \tilde g^{(\lfloor \log(1/\lambda)/\varepsilon\rfloor+1)}(r)$ such that
\[
\PP_p\left(\text{$S_n$ and $S_{g(n)}$ are connected~}\right)>1-\varepsilon
\]
or 
\[
\PP_p\left(\text{$S_r$ and $S_s$ are connected~}\right)<\lambda.
\]
\end{theorem}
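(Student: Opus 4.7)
My plan is to apply Corollary \ref{cor:independent} to the family $B^p_{n,k}$ from Lemma \ref{lem:perc}, namely the event that $S_n$ and $S_k$ are not connected. Once closure \eqref{closure} and pairwise independence \eqref{pairwise} are verified on the finite range $r \leq n \leq m < l \leq k \leq s$, the two disjuncts produced by Corollary \ref{cor:independent} translate directly into the two disjuncts of the theorem by passing to complements: $\PP_p(B^p_{n,g(n)}) < \varepsilon$ becomes $\PP_p(\text{$S_n$ and $S_{g(n)}$ are connected}) > 1-\varepsilon$, and $\PP_p(B^p_{r,s}) > 1 - \lambda$ becomes $\PP_p(\text{$S_r$ and $S_s$ are connected}) < \lambda$.

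Closure on the range is essentially geometric. Since the $\ell^\infty$-norm along any path changes by exactly one at each step, an open path from $S_n$ to $S_k$ must visit $S_j$ for every intermediate $j \in [n, k]$, yielding in particular a subpath from $S_n$ to $S_m$ and another from $S_l$ to $S_k$. Contrapositively, whenever either $B^p_{n,m}$ or $B^p_{l,k}$ holds, $B^p_{n,k}$ must hold as well.

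The main task, and the step I expect to be most delicate, is independence. The key intermediate claim is that $B^p_{n,m}$ depends only on the edges whose endpoints both lie in the annulus $\{x : n \leq \norm{x}_\infty \leq m\}$. To prove this I would show that any open path from $S_n$ to $S_m$ can be pruned to one contained in this annulus: given a path $x_0, \ldots, x_j$ with $x_0 \in S_n$ and $x_j \in S_m$, take $i_1$ to be the smallest index with $\norm{x_{i_1}}_\infty \geq m$ and $i_0$ the largest $i \leq i_1$ with $\norm{x_i}_\infty \leq n$. The unit-step property of the norm forces $\norm{x_{i_0}}_\infty = n$ and $\norm{x_{i_1}}_\infty = m$, and by extremality the intermediate vertices satisfy $n < \norm{x_i}_\infty < m$, so the subpath $x_{i_0}, \ldots, x_{i_1}$ lies entirely in the required annulus. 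Since the annuli for $B^p_{n,m}$ and $B^p_{l,k}$ are disjoint whenever $m < l$ and both sit inside $\bigcup_{n=r}^s S_n$, independence follows from the hypothesised independence of edges in that region, and Corollary \ref{cor:independent} then yields the claimed disjunction.
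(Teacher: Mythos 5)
Your proof is correct and follows essentially the same route as the paper: apply Corollary \ref{cor:independent} with the family $B^p_{n,k}$, verify the closure condition \eqref{closure} by observing a path from $S_n$ to $S_k$ must pass through intermediate squares, and verify the independence condition \eqref{pairwise} by localising each event to a finite annulus of edges (a step the paper simply declares ``clear''). One small correction to your wording: along a nearest-neighbour lattice path the $\ell^\infty$-norm changes by \emph{at most} one per step, not exactly one (a step can remain on the same square $S_j$), but your pruning via the discrete intermediate value argument is unaffected and your endpoints $\norm{x_{i_0}}_\infty = n$ and $\norm{x_{i_1}}_\infty = m$ are still forced.
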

\begin{proof}
Directly from Corollary \ref{cor:independent}. To see that $(B^p_{n,k})$ satisfies the required conditions, note that for $n\leq m<l\leq k$, we have $\overline{B^p_{n,k}}\subseteq \overline{B^p_{n,m}}\cap\overline{B^p_{l,k}}$ since whenever there exists a path from $S_n$ to $S_k$, that must include a subpath from $S_n$ to $S_m$, together with a subpath from $S_l$ to $S_k$. The independence condition is clear.
\end{proof}

\begin{corollary}
\label{percolation2}
Consider the percolation model $(\ZZ^d,E^p)$ for $p\in [0,1]$. Suppose that $\theta(p)\geq \lambda_p$ for some $\lambda_p>0$. Then for any $\varepsilon\in (0,1)$ and $g:\NN\to\NN$, there exists $n\leq \tilde g^{(\lfloor \log(1/\lambda_p)/\varepsilon\rfloor+1)}(0)$ such that
\[
\PP_p\left(\text{$S_n$ and $S_{g(n)}$ are connected~}\right)>1-\varepsilon.
\]
\end{corollary}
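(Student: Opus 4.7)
The plan is to apply Theorem \ref{percolation1} directly with $r := 0$ and $\lambda := \lambda_p$, together with the given $\varepsilon$ and $g$. Since we are working in the full percolation model $(\ZZ^d, E^p)$, where all edges are independent with parameter $p$, the hypothesis of Theorem \ref{percolation1} (that edges between neighbouring points in $\bigcup_{n=0}^s S_n$ are open with probability $p$ independently) is automatically satisfied, regardless of $s$. To obtain the desired conclusion from Theorem \ref{percolation1}, I then just need to rule out its second alternative, namely that $\PP_p(\text{$S_0$ and $S_s$ are connected}) < \lambda_p$.

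To rule this out, I would show the set-theoretic inclusion
\[
\{\text{origin is part of an infinite connected cluster}\} \subseteq \{\text{$S_0$ and $S_s$ are connected}\}
\]
for every $s \geq 0$. First observe that by the definition of $S_n$, we have $S_0 = \{0\}$, so the event on the right is exactly ``the origin is connected to some point in $S_s$''. Now if the origin lies in an infinite cluster $C$, then $C$ is unbounded in $\ZZ^d$, so it contains some point $y$ with $\max_i |y^i| > s$. There is therefore an open path $x_0 = 0, x_1, \ldots, x_k = y$ in $C$; since consecutive points $x_i, x_{i+1}$ are neighbours, the value $\max_i |x_j^i|$ changes by at most $1$ along the path. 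Starting at $0$ and ending at a value strictly greater than $s$, this sequence must hit the value $s$ exactly, giving a point of the path in $S_s$. Hence the origin is connected to $S_s$, establishing the inclusion.

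Taking probabilities yields $\PP_p(\text{$S_0$ and $S_s$ are connected}) \geq \theta(p) \geq \lambda_p$, which is incompatible with the second alternative in Theorem \ref{percolation1}. Hence the first alternative must hold, producing some $n \leq \tilde g^{(\lfloor \log(1/\lambda_p)/\varepsilon\rfloor+1)}(0)$ with $\PP_p(\text{$S_n$ and $S_{g(n)}$ are connected}) > 1-\varepsilon$, as required.

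The only nontrivial step is the discrete ``intermediate value'' argument used to show the set inclusion above; everything else is a direct instantiation of Theorem \ref{percolation1}. I expect no genuine obstacle here, as the argument is essentially a one-line observation about paths in the integer lattice, and the rest of the proof is bookkeeping.
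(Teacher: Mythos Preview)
Your proposal is correct and follows essentially the same approach as the paper: apply Theorem \ref{percolation1} with $r:=0$ and $\lambda:=\lambda_p$, and rule out the second alternative by showing $\PP_p(\text{$S_0$ and $S_s$ are connected})\geq \theta(p)\geq \lambda_p$. The paper phrases the key inclusion as (one direction of) the equivalence between the origin lying in an infinite cluster and $\forall k\,\overline{B^p_{0,k}}$, appealing to the reasoning of Lemma \ref{lem:perc}; your discrete intermediate-value argument via $\max_i|x^i|$ is exactly the kind of path argument that lemma encapsulates.
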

\begin{proof}
By similar reasoning to Lemma \ref{lem:perc}, one can show that the origin being part of an infinite connected cluster is equivalent to $\forall k\, \widebar{B^p_{0,k}}$, and therefore for any $s\in\NN$:
\[
\PP_p\left(\text{$S_0$ and $S_s$ are connected~}\right)=\PP_p\left(\widebar{B^p_{0,s}}\right)\geq \PP_p\left(\forall k\, \widebar{B^p_{0,k}}\right)=\theta(p)\geq\lambda_p,
\]
and so the result follows from Theorem \ref{percolation1}, setting $\lambda:=\lambda_p$ and $r:=0$.
\end{proof}
It should be noted that in the case of Corollary \ref{percolation2}, better bounds on $n$ can often be read off from the literature. For example, Theorem 2 of \cite{kesten:80:half} provides, in the case $p>1/2$, upper bounds on the probability that the infinite connected cluster contains no vertices within distance $n$ of the origin, which would then result in a bound on $n$ that is even independent of $g$. However, unlike the results of \cite{kesten:80:half}, which require significant machinery to establish, Corollary \ref{percolation2} is completely elementary, and applies uniformly in arbitrary dimensions. 

We consider Theorem \ref{percolation1} to be of interest in its own right, as a full finitisation of the statement that $\psi(p)\in\{0,1\}$, equivalent in strength to the latter but formulated entirely in terms of finite regions of the lattice $\ZZ^d$. Indeed, Theorem \ref{percolation1} bears a passing resemblance to Russo's approximate zero-one law \cite{russo:82:zeroone} (see also \cite{talagrand:94:zeroone}), in that we replace an infinite model with a finite one, and establish that an event over this finite model holds with probability close to one or zero (however our result differs in that we work in a more abstract setting -- with Theorem \ref{percolation1} an instance of a general finitary zero-one law -- whereas Russo's is specific to percolation and also provides information about phase transitions).

\section{Outlook}

We have initiated the study of zero-one laws from a quantitative and finitistic perspective, combining abstract results with a series of simple applications across various representative areas where zero-one laws traditionally appear. We propose that further work in this direction would be of considerable interest, from both a purely logical perspective and also with an eye towards serious applications in probability theory.

Given their inherent set-theoretic complexity and nonconstructive nature, the analysis of stronger zero-one laws through e.g.\ the Dialectica interpretation would be of conceptual value, as a testing ground for the applicability of logical methods in probability, likely requiring new theoretical developments such as a tame way of treating $\sigma$-algebras. The zero-one law considered here can be treated relatively easily as it imposes a fixed logical structure on tail events, something that is certainly not guaranteed in the general Kolmogorov zero-one law, where tail events can be of arbitrary complexity in terms of infinite unions and intersections, the latter being one of the main challenges in applying the Dialectica in probability \cite{neri-pischke:pp:formal}. Were progress to be made on this front, a subsequent analysis of the more general L\'evy's zero–one law could potentially make use of recent insights gained in the quantitative study of martingales \cite{neri-powell:25:martingale}.

In terms of applications outside of logic, the process of finitising zero-one laws, including the one treated here, gives us a method for eliminating them when they appear as part of a larger proof, in line with the modular nature of the (classical) Dialectica interpretation. It is an open question whether either the finitary zero-one law given here, or a future extension thereof, could be used to obtain new quantitative results that are of direct relevance in probability theory, and we speculate that percolation theory in particular may represent a promising area to explore in this regard.\\

\noindent\textbf{Acknowledgements.} The authors thank Nicholas Pischke and Paulo Oliva for interesting comments and discussions, and suggesting many improvements to an earlier draft of this work, and Paulo Oliva for clarifying aspects of \cite{arthan-oliva:21:borel-cantelli} and communicating a shortened version of one of the proofs, included here as the proof of Lemma \ref{lem:er}. The first author was partially supported by an EPSRC grant EP/W035847/1, and the second by an EPSRC DTG training grant  EP/W524712/1.

\bibliographystyle{acm}
\bibliography{../tpbiblio,awlib}

\end{document}